\documentclass{amsart}

\usepackage{fullpage}	% smaller margins

\usepackage{amsmath}
\usepackage{amssymb}
\usepackage{color}
\usepackage{mathrsfs}
\usepackage{url}
\usepackage{enumitem}
\usepackage{stmaryrd}
\usepackage[all]{xy}
\usepackage{verbatim}

\newtheorem{Thm}{Theorem}[section]

\newtheorem{Lem}[Thm]{Lemma}
\newtheorem{Prop}[Thm]{Proposition}

\theoremstyle{definition}
\newtheorem{Def}[Thm]{Definition}

\theoremstyle{remark}
\newtheorem{Rmk}[Thm]{Remark}

\DeclareMathOperator{\Char}{Char}

%\numberwithin{equation}{Thm}

\newcommand{\defi}[1]{\textsf{#1}} 				% for defined terms

\newcommand{\FF}{\mathbb{F}}

\newcommand{\HH}{\mathbb{H}}

\newcommand{\QQ}{\mathbb{Q}}

\newcommand{\HMW}{H_{\text{MW}}}

%%%%%%%%%%%%%% COLOR COMMENTS! %%%%%%%%%%%%%%%
\usepackage{color}

% changed above definition to make comments disappear
% \newcommand{\david}[1]{}
% \newcommand{\chris}[1]{}

%%%%%%%%%%%%%%%%%%%%%%%%%%%%%%%%%%%%%%%%%%%%%%%

\usepackage[
%	draft,
%	colorlinks,
	backref,
	pdfauthor={David Brown}, % add other authors
%	pdftitle={Paper title goes here},
]{hyperref}

\DeclareMathOperator{\Spec}{Spec}

\raggedbottom

\author{Christopher Davis}
\address{University of California, Irvine, Dept of
Mathematics, Irvine, CA 92697}
\email{davis@math.uci.edu}
\author{David Zureick-Brown}
\address{Dept. of Mathematics and Computer science, Emory University,
Atlanta, GA 30322}
\email{dzb@mathcs.emory.edu}
\date{\today}

\begin{document}

\title{Integral Monsky-Washnitzer cohomology and the overconvergent de Rham-Witt complex}

\maketitle 

\begin{abstract}
In their paper which introduced Monsky-Washnitzer cohomology, Monsky and Washnitzer described conditions under which the definition can be adapted to give \emph{integral} cohomology groups.  It seems to be well-known among experts that their construction always gives well-defined integral cohomology groups, but this fact also does not appear to be explicitly written down anywhere.  In this paper, we prove that the integral Monsky-Washnitzer cohomology groups are well-defined, for any nonsingular affine variety over a perfect field of characteristic~$p$.  We then compare these cohomology groups with overconvergent de\thinspace Rham-Witt cohomology.  It was shown earlier that if the affine variety has small dimension relative to the characteristic of the ground field, then the cohomology groups are isomorphic.  We extend this result to show that for any nonsingular affine variety, regardless of dimension, we have an isomorphism between integral Monsky-Washnitzer cohomology and overconvergent de\thinspace Rham-Witt cohomology in degrees which are small relative to the characteristic.
\end{abstract}

\section{Introduction}

Monsky-Washnitzer cohomology is usually defined rationally, but cases in which it could be defined integrally were already given in Monsky and Washnitzer's original paper \cite[Remark 3, p.~205]{MW68}.  The integral definition for arbitrary nonsingular affine varieties (and in particular the fact that it is well-defined) does not seem to be written down formally anywhere, and so we describe it below.  

Overconvergent de Rham-Witt cohomology was studied in \cite{DavisLZ:Rham}.  It was shown there that, rationally, it agreed with Monsky-Washnitzer cohomology.  Conditions were given on the nonsingular affine variety $\overline{X}$ under which the integral cohomology groups were isomorphic \cite[Corollary~3.25(a)]{DavisLZ:Rham}.  In this paper, we extend the result to show that certain of these cohomology groups are always isomorphic, without any conditions on the nonsingular affine variety $\overline{X}$.  In particular, the integral cohomology groups $H^0$ and $H^1$ are always isomorphic.  

Our main results are the following.  The authors suspect that the first is already known to experts.  (See the end of this section for our notation and conventions.)
\begin{Thm}  \label{main theorem} Let $\overline{X} = \Spec \overline{A}$ denote a nonsingular affine variety over a perfect field $k$ of characteristic $p$.  Let $A^{\dag}$ denote the weak completion of a nonsingular lift to $W(k)$.  
\begin{enumerate}
\item \label{MW part} The integral Monsky-Washnitzer cohomology groups $H^i(\Omega^{\bullet}_{A^{\dag}/W(k)})$ are well-defined. More precisely, given two lifts $A$ and $A'$ of $\overline{A}$, there exists an isomorphism
\[
H^i(\Omega^{\bullet}_{A^{\dag}/W(k)}) \xrightarrow{\sim} H^i(\Omega^{\bullet}_{A'^{\dag}/W(k)}).
\]
\item \label{small degree part} We have an isomorphism
\[
H^i(\Omega^{\bullet}_{A^{\dag}/W(k)}) \xrightarrow{\sim} H^i(W^{\dag}\Omega^{\bullet}_{\overline{A}})
\]
for all $i < p$.
\end{enumerate}
\end{Thm}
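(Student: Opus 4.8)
The statement splits into two parts. The plan for part~\eqref{MW part} is to run the classical Monsky--Washnitzer comparison argument but with \emph{integral} coefficients, and part~\eqref{small degree part} is then deduced by bootstrapping from \cite[Corollary~3.25(a)]{DavisLZ:Rham}. So let $A$ and $A'$ be two nonsingular lifts of $\overline{A}$ to $W(k)$. Since $A$ and $A'$ are smooth over $W(k)$ and $A^{\dag},A'^{\dag}$ are weakly complete, the standard theory of the weak completion (Monsky--Washnitzer \cite{MW68}; in particular the lifting of morphisms, obtained by choosing lifts in $A'^{\dag}$ of a set of algebra generators of $A$ and adjusting them by a Hensel-type successive approximation --- using the Jacobian criterion --- so that the defining relations of $A$ are satisfied) yields $W(k)$-algebra homomorphisms $\phi\colon A^{\dag}\to A'^{\dag}$ and $\phi'\colon A'^{\dag}\to A^{\dag}$, each reducing to $\id_{\overline{A}}$ modulo $p$. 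Then $\phi'\circ\phi$ and $\id_{A^{\dag}}$ both reduce to $\id_{\overline{A}}$, and likewise $\phi\circ\phi'$ and $\id_{A'^{\dag}}$, so part~\eqref{MW part} reduces to the following claim: \emph{if $f,g\colon A^{\dag}\to B^{\dag}$ are $W(k)$-algebra homomorphisms, for any two nonsingular lifts $A$ and $B$, which reduce to the same morphism modulo $p$, then $f$ and $g$ induce the same map $H^{i}(\Omega^{\bullet}_{A^{\dag}/W(k)})\to H^{i}(\Omega^{\bullet}_{B^{\dag}/W(k)})$.} Given this, $\phi$ and $\phi'$ induce mutually inverse isomorphisms on cohomology, and part~\eqref{MW part} follows.

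To prove the claim I would exhibit a chain homotopy defined over $W(k)$, not merely rationally; this is precisely where the hypothesis $f\equiv g\pmod{p}$ is used. In the classical, rational argument one constructs a homomorphism from $A^{\dag}$ to $(B[t])^{\dag}$, the weak completion of $B[t]$, whose evaluations at $t=0$ and $t=1$ are $f$ and $g$, and the homotopy operator then involves the antiderivative with coefficients $1/(k+1)$, introducing denominators. Here instead, since $f\equiv g\pmod{p}$, one can construct --- by an analogous successive approximation, lifting step by step modulo powers of $p$ and using the smoothness of $A$ --- a $W(k)$-algebra homomorphism $\Theta\colon A^{\dag}\to (B[t])^{\dag}$ whose evaluations at $t=0$ and at $t=p$ are $f$ and $g$ respectively; the compatibility needed to start the induction is exactly $f\equiv g\pmod{p}$, since $z\mapsto z|_{t=0}$ and $z\mapsto z|_{t=p}$ agree modulo $p$ on $(B[t])^{\dag}$. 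Let $\Theta_{*}\colon\Omega^{\bullet}_{A^{\dag}/W(k)}\to\Omega^{\bullet}_{(B[t])^{\dag}/W(k)}$ be the induced map on de Rham complexes; for $\omega\in\Omega^{i}_{A^{\dag}/W(k)}$ write the $dt$-component of $\Theta_{*}(\omega)$ as $dt\wedge\sum_{k}\beta_{k}t^{k}$ with $\beta_{k}\in\Omega^{i-1}_{B^{\dag}/W(k)}$, and set $h(\omega)=\sum_{k}\beta_{k}\,p^{k+1}/(k+1)$. The usual homotopy formula relating $\Omega^{\bullet}_{(B[t])^{\dag}/W(k)}$ to $\Omega^{\bullet}_{B^{\dag}/W(k)}$, with the antiderivative $\int_{0}^{p}$ and evaluation at $t=p$, gives $dh+hd=g_{*}-f_{*}$, where $f_{*},g_{*}$ denote the maps induced by $f,g$ on the de Rham complexes. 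The essential point is that $h$ has coefficients in $W(k)$: the forms $\beta_{k}$ already lie in $\Omega^{i-1}_{B^{\dag}/W(k)}$ because $\Theta$ is a ring homomorphism, and the only denominators are the $1/(k+1)$, each dominated by its companion factor $p^{k+1}$, since $v_{p}\bigl(p^{k+1}/(k+1)\bigr)\ge k+1-\log_{p}(k+1)\ge 1$; the same estimate, which grows linearly in $k$, also keeps $h(\omega)$ inside the overconvergent complex. Hence $f_{*}=g_{*}$ on every $H^{i}$, which proves the claim and part~\eqref{MW part}.

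For part~\eqref{small degree part}, recall from \cite{DavisLZ:Rham} the natural comparison morphism of complexes $\Omega^{\bullet}_{A^{\dag}/W(k)}\to W^{\dag}\Omega^{\bullet}_{\overline{A}}$, and that \cite[Corollary~3.25(a)]{DavisLZ:Rham} proves it to be a quasi-isomorphism when $\dim\overline{X}$ is small relative to~$p$. The plan is to revisit that proof and observe that the dimension hypothesis enters only degree by degree: the comparison in cohomological degree $i$ is controlled --- via the overconvergent ``integration'' used there to invert the comparison map rationally --- by denominators dividing $i!$, and the hypothesis $\dim\overline{X}<p$ (in its precise form) serves only to make these units in $W(k)$ for every degree $i$ that occurs, namely $i\le\dim\overline{X}$. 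Since we assert an isomorphism only on $H^{i}$ for $i<p$, it suffices to run the same construction through degree $p-1$, together with the single differential into degree $p$; every denominator then divides $(p-1)!$ and is a unit in $W(k)$, so no hypothesis on $\dim\overline{X}$ is needed. Combined with part~\eqref{MW part}, which allows us to use any convenient nonsingular lift, this gives the isomorphism $H^{i}(\Omega^{\bullet}_{A^{\dag}/W(k)})\xrightarrow{\sim}H^{i}(W^{\dag}\Omega^{\bullet}_{\overline{A}})$ for all $i<p$.

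I expect the main obstacle in part~\eqref{MW part} to be the overconvergent construction of $\Theta$, together with the verification that $\Theta$, and hence $h$, genuinely lie in the overconvergent (and $W(k)$-integral) complexes: the successive-approximation bookkeeping in the variable $t$ is routine in spirit but must be carried out carefully enough to control overconvergence radii, even though the $p$-adic estimates above show the denominators themselves cause no trouble. For part~\eqref{small degree part}, the point needing verification is that the dimension hypothesis in \cite[Corollary~3.25(a)]{DavisLZ:Rham} really does enter only through these degree-wise factorial denominators --- rather than, say, through the combinatorics of a covering or the structure of $W^{\dag}\Omega^{\bullet}_{\overline{A}}$ in each fixed degree --- so that the restriction ``$i<p$'' is exactly the degree-wise shadow of ``$\dim\overline{X}<p$''.
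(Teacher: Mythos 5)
Your overall strategy is genuinely different from the paper's, and for part~(\ref{MW part}) it has a real gap exactly at the point you flag as a ``routine in spirit'' obstacle: the construction of the interpolating homomorphism $\Theta\colon A^{\dag}\to (B[t])^{\dag}$ with $e_{0}\circ\Theta=f$ and $e_{p}\circ\Theta=g$. Presenting $A=W(k)[x_1,\dots,x_n]/\mathfrak{a}$ and starting from $\Theta_0(x_i)=f(x_i)+t\,(g(x_i)-f(x_i))/p$, the defect $F(\Theta_0(x))$ for $F\in\mathfrak{a}$ lies in the ideal $J=\ker(e_0,e_p)=(t(t-p))$, and the Newton/Hensel corrections you propose live in successively higher powers of $J$. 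But $J$ is not topologically nilpotent for the weak ($p$-adic) topology of $(B[t])^{\dag}$ --- the element $t^{2m}$ appearing in $(t(t-p))^{m}$ carries no $p$-divisibility --- so the successive approximation has no reason to converge, and the standard lifting theorems for weakly complete algebras (which require the kernel to be contained in $p$ times the ring) do not apply. This is not a technicality one can wave away: it is precisely why Monsky and Washnitzer establish the integral homotopy only for complete transversal intersections (where an explicit presentation makes the construction tractable), and a general nonsingular affine need not be a complete transversal intersection globally. The paper therefore does \emph{not} globalize the homotopy; it takes MW's local statement (Proposition~\ref{local well-definedness}) as a black box, covers $\overline{X}$ by complete transversal intersections stable under intersection (Lemma~\ref{covering by complete transversal intersections}), uses Meredith's results to show the Monsky--Washnitzer complex sheafifies with acyclic terms (Lemma~\ref{MW acyclic}, Lemma~\ref{hypercohomology interpretation}), and glues with the \v{C}ech spectral sequence (Proposition~\ref{Cech spectral sequence}, Lemma~\ref{spectral sequence rectangle}). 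Your $p$-adic estimates on $p^{k+1}/(k+1)$ are fine, but they only matter once $\Theta$ exists.

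For part~(\ref{small degree part}) your proposal is more a conjecture about where the hypothesis of \cite[Corollary~3.25(a)]{DavisLZ:Rham} enters than a proof, and it misidentifies the structure. The local comparison isomorphism $\sigma$ of \cite[Theorem~3.19]{DavisLZ:Rham} already holds in \emph{all} degrees with no dimension restriction --- but only for special affines, and it depends on a presentation, so it does not glue. The paper's actual work is (i) to replace $\sigma$ by the functorial map $t_F$ of Definition~\ref{definition of comparison map}, which does sheafify (Proposition~\ref{t_F for sheaves}); (ii) to show $t_F=\sigma$ on $H^{i}$ for $i<p$ by an integrality estimate on the homotopy $L\circ\varphi$ of \cite[Proposition~3.27]{DavisLZ:Rham} (Proposition~\ref{independent of F}) --- this, not a factor of $i!$, is where the degree restriction arises; and (iii) to globalize over a covering by affines that are simultaneously special and complete transversal intersections (Lemma~\ref{covering for OdRW}) via the same \v{C}ech spectral sequence. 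So the covering you hope to avoid is in fact essential, even though (as you correctly suspect) the numerical restriction itself does not come from the combinatorics of the covering.
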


\begin{proof}
  The proof of (\ref{MW part}) is given in \cite[Remark~3, p.~205]{MW68} in the special case that $\overline{X}$ is a complete transversal intersection.  For the proof in the general case, see Section~\ref{MW section} below.  The key pieces to the proof are the local result of Monsky and Washnitzer just referenced, a result of Meredith concerning sheaf properties of the Monsky-Washnitzer complex, and a \v{C}ech spectral sequence argument.

%The proof of (\ref{small dimension part}) is \cite[Corollary~3.25(a)]{DavisLZ:Rham}.

The proof of (\ref{small degree part}) is given in Section~\ref{OdRW section}.  Again the strategy of the proof is to prove the result locally, which we do using results from \cite{DavisLZ:Rham}, and then to deduce the result in general using a \v{C}ech spectral sequence argument.
\end{proof}

\subsection*{Acknowledgements} This paper answers a question asked by Liang Xiao at the first author's thesis defense.  We thank Liang for his interest and for recognizing the opportunity to extend the previous results, and also for a careful reading of an earlier draft.  The first author also thanks his earlier coauthors Andreas Langer and Thomas Zink for their support and encouragement.  Thanks also to Kiran Kedlaya for many useful conversations and suggestions.

\subsection*{Notation}
Let $k$ denote a perfect field of characteristic $p$ and let $W(k)$ denote the ring of $p$-typical Witt vectors with coefficients in $k$.  By \defi{variety} over $k$, we mean a separated and integral scheme of finite type over the (perfect but not necessarily algebraically closed) field $k$.  Throughout this paper, let $\overline{A}$ denote the coordinate ring of a nonsingular affine variety over $k$.    We write $\overline{X} = \Spec \overline{A}$.  

\section{Integral Monsky-Washnitzer cohomology} \label{MW section}

Let $\overline{X} = \Spec \overline{A}$ denote a nonsingular affine variety over the perfect field $k$ of characteristic~$p$.   We first reproduce Monsky and Washnitzer's construction of the Monsky-Washnitzer cohomology groups associated to $\overline{A}$, with the key difference that we never tensor with $\QQ$.  Monsky and Washnitzer showed in special cases that these \emph{integral} Monsky-Washnitzer cohomology groups are well-defined; see \cite[Remark~3, p.~205]{MW68}.  Here we combine Monsky and Washnitzer's argument with results of Meredith to prove that the integral Monsky-Washnitzer cohomology groups are well-defined in general, i.e., for any nonsingular affine variety $\overline{X}$, they do not depend on a choice of lift to characteristic~0.

Keep notation as in the previous paragraph.  By \cite [Theorem 6]{elkik:lifting} we can lift $\Spec \overline{A}$ (in many ways) to an affine variety which is smooth over $\Spec W(k)$. For such a lift $\Spec A$ we define $\widehat{A}$ to be the $p$-adic completion of $A$.  Following \cite[Section~2.2]{Kedlaya:computingZetaFunctions}, we define the \defi{weak completion} (or alternatively the \defi{integral dagger algebra}) $A^{\dag}$ as the smallest $p$-adically saturated subring of $\widehat{A}$ containing $A$ and all series of the form 
\[
\sum_{i_1, \ldots, i_n \geq 0} c_{i_1, \ldots, i_n} x_1^{i_1} \cdots x_n^{i_n},
\] where $c_{i_1, \ldots, i_n} \in W(k)$ and $x_{j} \in pA^{\dag}$. 
When $A = W(k)[x_1, \ldots, x_n]$, $A^{\dag}$ is the set of series which converge on a $p$-adic polydisc of radius strictly greater than one, and in general,
by \cite[Theorem~2.2]{MW68}, we have the following alternative description of $A^{\dag}$: any surjection $k[x_1, \ldots, x_n] \to \overline{A}$ lifts to a map $W(k)\langle x_1, \ldots, x_n\rangle \to \widehat{A}$, and $A^{\dag}$ is the image of $W(k)\langle x_1, \ldots, x_n \rangle^{\dag}$ (this is independent of the choice of surjection). 

\begin{Prop} \label{maps lift}
  Let $\overline{\varphi}\colon \overline{A} \rightarrow \overline{B}$ denote a ring homomorphism.  Choose $A,\,B$, two lifts to characteristic~0 as above, and let $A^{\dag},\,B^{\dag}$ denote the corresponding weak completions.  Then there exists a (typically not unique) $p$-adically continuous ring homomorphism $\varphi\colon A^{\dag} \rightarrow B^{\dag}$ lifting $\varphi$.    
\end{Prop}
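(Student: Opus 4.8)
The plan is to reduce the existence of $\varphi$ to a lifting problem for a solution of polynomial equations and to solve that problem by Newton's method, using the nonsingularity of $\overline{X}$ to keep the linear problems encountered along the way solvable. In the special case that $\overline{X}$ is a complete intersection this is exactly the argument of \cite[Remark~3, p.~205]{MW68}; the extra work in the general case lies entirely in organizing the iteration so that overconvergence of the limit is visible.

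First I would fix presentations. Choose a surjection $k[x_1,\ldots,x_n]\twoheadrightarrow\overline{A}$ and lift it to $W(k)\langle x_1,\ldots,x_n\rangle\to\widehat{A}$ as above, so that, by \cite[Theorem~2.2]{MW68}, $A^{\dag}$ is the image of $W(k)\langle x_1,\ldots,x_n\rangle^{\dag}$; let $f_1,\ldots,f_r\in W(k)[x_1,\ldots,x_n]$ generate the kernel $I$ of $W(k)[x_1,\ldots,x_n]\to A$ and set $\overline{f}_j=f_j\bmod p$. A continuous $W(k)$-algebra homomorphism out of $W(k)\langle x_1,\ldots,x_n\rangle^{\dag}$ may be specified by an arbitrary choice of images of $x_1,\ldots,x_n$ in $B^{\dag}$: substituting elements of a $p$-adically complete ring into an overconvergent series converges, and the result lies in $B^{\dag}$ when the substituted elements do. Such a homomorphism descends to $A^{\dag}$ as soon as it kills each $f_j$, since it then kills $I$ and hence, by continuity and $p$-adic saturation, the full kernel of $W(k)\langle x_1,\ldots,x_n\rangle^{\dag}\to A^{\dag}$. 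So it suffices to produce $c_1,\ldots,c_n\in B^{\dag}$ with $f_j(c_1,\ldots,c_n)=0$ for all $j$ and $c_i\equiv\overline{\varphi}(\overline{x}_i)\pmod{pB^{\dag}}$; the congruences force the resulting map to lift $\overline{\varphi}$, using that $A^{\dag}/pA^{\dag}=\overline{A}$ and $B^{\dag}/pB^{\dag}=\overline{B}$ (which follow from $p$-adic saturation).

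Modulo $p$ the tuple $\overline{c}_i:=\overline{\varphi}(\overline{x}_i)$ already satisfies $\overline{f}_j(\overline{c}_1,\ldots,\overline{c}_n)=\overline{\varphi}\big(\overline{f}_j(\overline{x}_1,\ldots,\overline{x}_n)\big)=0$ in $\overline{B}$, so it is an approximate solution, and I would improve it by Newton iteration. Nonsingularity of $\overline{X}$ makes the conormal sequence of the closed immersion $\overline{X}\hookrightarrow\Spec k[x_1,\ldots,x_n]$ split exact, with locally free conormal module, so the Jacobian matrix of $f_1,\ldots,f_r$, after base change along $\overline{\varphi}$, has a fixed generalized inverse over $\overline{B}$; one lifts this inverse (and the subsequent Newton inverses) over $B^{\dag}$, picks a lift $c^{(0)}$ of $\overline{c}$, and iterates the usual correction $c^{(m+1)}=c^{(m)}-\big(\text{approximate inverse}\big)\big(f_1(c^{(m)}),\ldots,f_r(c^{(m)})\big)$. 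One checks in the standard way that $v_p\big(f_j(c^{(m)})\big)\to\infty$ (quadratically, for a genuine Newton step), that $(c^{(m)})$ is $p$-adically Cauchy, and that its limit solves $f_j=0$; the only point requiring the geometry is that at each step the error vector $\big(f_j(c^{(m)})\big)_j$ lies in the image of the Jacobian, which holds because it satisfies every syzygy $\sum_j h_j f_j=0$ (evaluate at $c^{(m)}$ and use that $B^{\dag}$ is $p$-torsion free) and because smoothness identifies that image with a direct summand containing all such vectors.

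The main obstacle is to see that this limit lies in $B^{\dag}$, and not merely in the $p$-adic completion $\widehat{B}$, since $B^{\dag}$ is not $p$-adically complete. This is not formal: the $p$-adic valuations of the successive Newton corrections grow, but so does the complexity (the ``degree'' relative to a presentation of $B^{\dag}$) required to express them, and one must show the former outpaces the latter. This is precisely the type of uniform convergence estimate on which the theory of weakly complete finitely generated algebras is built, and I would either reproduce the relevant computation — organizing the iteration and bounding the degrees of the Newton iterates carefully enough that the $m$th correction has $p$-adic valuation exceeding its degree — or, more economically, cite it, since the lifting of homomorphisms between reductions of weakly complete finitely generated algebras is part of the standard package; see \cite{MW68} and \cite[Section~2.2]{Kedlaya:computingZetaFunctions}. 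Finally, uniqueness genuinely fails, even for $\overline{\varphi}=\mathrm{id}_{\overline{A}}$, where one may postcompose with any automorphism of $A^{\dag}$ congruent to the identity modulo $p$; this accounts for the parenthetical in the statement.
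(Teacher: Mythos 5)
Your strategy is sound, but be aware that the paper disposes of this proposition in two lines: it notes that $A^{\dag}$ is flat over $A$ by \cite[Proposition~1.3]{Mer71} and then invokes \cite[Theorem~2.4.4]{vdP86}, which is exactly the statement that homomorphisms between the reductions of weakly complete finitely generated algebras lift when the source is smooth. What you have written is, in effect, a skeleton of the proof of that cited theorem: present $A$ as $W(k)[x_1,\ldots,x_n]/(f_1,\ldots,f_r)$, observe that a continuous map out of $W(k)\langle x_1,\ldots,x_n\rangle^{\dag}$ is freely determined by the images of the $x_i$, and produce a common zero of the $f_j$ in $B^{\dag}$ lifting the $\overline{\varphi}(\overline{x}_i)$ by Newton iteration, using smoothness of $\Spec A$ over $W(k)$ to solve the successive linearized problems. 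So the two routes buy the same thing by different accounting: the paper outsources the whole argument, while you reconstruct its mechanism. The caveat is that you correctly isolate the only genuinely delicate point --- that the Newton limit is overconvergent, i.e.\ lies in $B^{\dag}$ rather than merely in $\widehat{B}$ --- and then do not carry out the degree-versus-valuation estimate, offering instead to ``reproduce the relevant computation or cite it.'' If you cite it, your proof collapses back to the paper's; if you want it self-contained, that estimate \emph{is} the content of the proposition and must be written out (it is the computation behind \cite[Theorem~2.4.4]{vdP86} and the lifting statements of \cite{MW68}). Two smaller remarks: your claim that killing the $f_j$ suffices for the map to descend to $A^{\dag}$ silently uses that the kernel of $W(k)\langle x_1,\ldots,x_n\rangle^{\dag}\to A^{\dag}$ is the saturation of the ideal generated by the $f_j$, which is where the flatness input of \cite[Proposition~1.3]{Mer71} is actually hiding in the paper's version; and your syzygy argument for solvability of the linearized equations is more cleanly phrased as formal smoothness of $A$ over $W(k)$, which lifts the map $A\to B^{\dag}/p^{m}$ through each square-zero extension $B^{\dag}/p^{m+1}\to B^{\dag}/p^{m}$ without any discussion of generalized inverses of a non-square Jacobian.
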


\begin{proof}
First note that $A^{\dag}$ is flat over $A$ by \cite[Proposition~1.3]{Mer71}. The rest of the assertion is proven in \cite[Theorem~2.4.4]{vdP86}.
\end{proof}

We define the \defi{module of continuous relative differentials} and the corresponding \defi{Monsky-Washnitzer complex} as follows.  First of all, if $A = W(k)[\overline{x}] := W(k)[x_1, \ldots, x_n]$, then $\Omega^1_{A^{\dag}}$ is the free $A^{\dag}$-module generated by $dx_1, \ldots, dx_n$.  The Monsky-Washnitzer complex
$\Omega^{\bullet}_{W(k)\langle\overline{x}\rangle^{\dag}} = \Omega^{\bullet}_{W(k)\langle\overline{x}\rangle^{\dag}/W(k)}$ is determined by the map 
\[
d\colon \sum_I c_I x^I \mapsto \sum_I \sum_{j = 1}^n i_jc_I \left(\frac{x^I}{x_j}\right)dx_j.
\]
In general, given $A \cong W(k)[\overline{x}]/\mathfrak{a}$,  $\Omega^{1}_{A^{\dag}} := \Omega^{1}_{A^{\dag}/W(k)}$ is the quotient of the $A^{\dag}$-module 
\[
A^{\dag} \otimes_{W(k)\langle\overline{x}\rangle^{\dag}}\Omega^{1}_{W(k)\langle\overline{x}\rangle^{\dag}/W(k)}
\] by the submodule generated by $dr$ for $r \in \mathfrak{a}$. Define $\Omega^i_{A^{\dag}} := \Lambda^i_{A^{\dag}} \Omega^1_{A^{\dag}}.$  Define a complex $\Omega^{\bullet}_{A^{\dag}}$ in the obvious way.  Note that this is not the same as the usual de\thinspace Rham complex over the ring $A^{\dag}$, because we have additional \emph{continuity} conditions such as 
\[
d\left(\sum p^i x^i\right) = \left( \sum ip^i x^{i-1} \right) dx.
\]

We now come to the definition of the integral Monsky-Washnitzer cohomology groups.  The main result of this section is that they are independent of choice of nonsingular lift from $\Spec \overline{A}$ to $\Spec A$.  

\begin{Def} 
Let $\Spec \overline{A}$ denote a nonsingular affine variety over $k$, and let $A^{\dag},\, \Omega^{\bullet}_{A^{\dag}/W(k)}$ be as above.  Define the \defi{(integral) Monsky-Washnitzer cohomology groups} to be $\HMW^i(\Spec \overline{A}) := H^i(\Omega^{\bullet}_{A^{\dag}/W(k)}).$  
\end{Def}

\begin{Def}
  Following \cite[Definition~3.3]{MW68}, we say that $\Spec \overline{B} \to  \Spec \overline{A}$ is a complete transversal intersection if $\overline{B} \cong \overline{A}[x_1,\ldots,x_n]/(\overline{F}_1,\ldots,\overline{F}_s)$, where the $s \times s$ subdeterminants of the matrix $\left(\overline{F}_{i,x_j}\right)$ generate the unit ideal in $\overline{B}$.
\end{Def}

\begin{Prop} \label{local well-definedness}
Let $\Spec \overline{A}$ denote a complete transversal intersection.  Let $\Spec A,\,\Spec A'$ denote two nonsingular lifts of $\Spec \overline{A}$, and let $A^{\dag},\,A'^{\dag}$ denote the corresponding dagger algebras.  Let $\varphi\colon A^{\dag} \rightarrow A'^{\dag}$ denote a map lifting the identity on $\overline{A}$, as in Proposition~\ref{maps lift}.  Then the induced map
\[
\varphi\colon \Omega^{\bullet}_{A^{\dag}/W(k)} \rightarrow \Omega^{\bullet}_{A'^{\dag}/W(k)}
\]
is a quasi-isomorphism.
\end{Prop}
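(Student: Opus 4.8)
The plan is to deduce the quasi-isomorphism from a single \emph{integral homotopy lemma}: any two $p$-adically continuous $W(k)$-algebra homomorphisms $A^{\dag} \to C^{\dag}$ that reduce to the same map modulo $p$ induce chain-homotopic maps on Monsky-Washnitzer complexes, and hence agree on cohomology. Granting this, the proposition follows formally. By Proposition~\ref{maps lift} I can choose a map $\psi\colon A'^{\dag} \to A^{\dag}$ lifting the identity on $\overline{A}$ in the opposite direction. Then both $\psi\circ\varphi$ and $\id_{A^{\dag}}$ are continuous $W(k)$-algebra endomorphisms of $A^{\dag}$ reducing to the identity on $\overline{A}$; by the homotopy lemma they induce the same map on $H^i(\Omega^{\bullet}_{A^{\dag}/W(k)})$, namely the identity. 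The symmetric argument applied to $\varphi\circ\psi$ shows it induces the identity on $H^i(\Omega^{\bullet}_{A'^{\dag}/W(k)})$. Hence $\varphi_*$ is invertible on cohomology in every degree, which is precisely the assertion that $\varphi$ is a quasi-isomorphism.

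It therefore remains to construct the homotopy, and here I would follow the original argument of Monsky and Washnitzer \cite[Remark~3, p.~205]{MW68}, first treating the free case and then descending. Write $A = W(k)[x_1,\ldots,x_n]/\mathfrak{a}$, let $f, g\colon A^{\dag}\to C^{\dag}$ be the two maps, and set $u_j := g(x_j) - f(x_j)$, which lies in $pC^{\dag}$ because $f\equiv g\pmod p$. I introduce the overconvergent ``cylinder'' $C^{\dag}\langle t\rangle^{\dag}$ together with the interpolating homomorphism $\Phi\colon W(k)\langle\overline{x}\rangle^{\dag}\to C^{\dag}\langle t\rangle^{\dag}$ sending $x_j\mapsto f(x_j) + t\,u_j$, which specializes to $f$ at $t=0$ and to $g$ at $t=1$. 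Composing with $\Phi$ and applying the standard fiberwise integration operator $\int_0^1$ (extract the $dt$-component $\eta\,dt$ of a form and integrate its expansion $\eta=\sum_m b_m t^m$ term by term, $t^m\mapsto 1/(m+1)$) produces a candidate homotopy operator $h\colon \Omega^i_{A^{\dag}}\to \Omega^{i-1}_{C^{\dag}}$ satisfying $g^* - f^* = d\,h + h\,d$ after tensoring with $\QQ$.

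The crux, and the one genuinely integral point, is to verify that $h$ lands in the \emph{integral} complex $\Omega^{\bullet}_{C^{\dag}}$ rather than merely in its rationalization. The integration step divides the coefficient of $t^m$ by $m+1$, so a priori it introduces denominators; the point is that the divisibility forced by $u_j\in pC^{\dag}$ exactly compensates. Indeed, since each variable is sent to $f(x_j)+t\,u_j$ with $u_j$ divisible by $p$, every power $t^k$ in the expansion of $\Phi^*$ carries a factor $p^k$, and differentiating to extract the $dt$-part shows that the coefficient $b_m$ lies in $p^{m+1}C^{\dag}$. As $v_p(m+1)\le \log_p(m+1) \le m+1$, each quotient $b_m/(m+1)$ remains in $C^{\dag}$, so $h$ is integral. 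One must also check that dividing coefficients by the polynomially growing factor $m+1$ does not shrink the radius of convergence below $1$, so overconvergence is preserved; this is routine. Finally, to pass from the free case to $A^{\dag} = W(k)\langle\overline{x}\rangle^{\dag}/\mathfrak{a}^{\dag}$, I invoke the complete transversal intersection hypothesis: because the $s\times s$ minors of the Jacobian $\left(\overline{F}_{i,x_j}\right)$ generate the unit ideal, the forms $dF_1,\ldots,dF_s$ split off as part of a basis, $\Omega^1_{A^{\dag}}$ is free, and the homotopy built upstairs descends to the quotient complex. I expect this descent---ensuring the integral homotopy respects both the relations $\mathfrak{a}$ and the continuity conditions simultaneously---to be the most delicate bookkeeping, but the Jacobian condition is precisely what makes it go through.
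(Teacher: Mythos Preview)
The paper's own proof is simply the citation to \cite[Remark~3, p.~205]{MW68}, so you are supplying far more detail than the paper does; your outline---reduce to an integral chain-homotopy between any two lifts of the same map, build the homotopy by integrating an overconvergent cylinder, and check integrality via $b_m\in p^{m+1}\Omega^{i-1}_{C^{\dag}}$ so that $v_p(m+1)\le m+1$ suffices---is indeed the Monsky--Washnitzer argument being invoked, and the reduction step using a lift $\psi$ in the opposite direction is exactly right.

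There is one genuine gap in the descent step. Your free-level homotopy does \emph{not} literally descend to $\Omega^{\bullet}_{A^{\dag}}$: for $r\in\mathfrak{a}$ the element $\Phi(r)=r(f(x)+tu)$ vanishes at $t=0,1$ but is not identically zero in $C^{\dag}\langle t\rangle^{\dag}$, so $h(r\omega)=\int_0^1\Phi(r)\,\eta$ has no reason to vanish, and thus $h$ need not factor through the quotient by $\mathfrak{a}\Omega^{\bullet}+d\mathfrak{a}\wedge\Omega^{\bullet}$. What the Jacobian hypothesis actually buys is the ability to \emph{correct} the naive interpolation: since $F_i(f(x)+tu)\equiv 0\pmod{p}$ and the $s\times s$ minors of $(\partial F_i/\partial x_j)$ generate the unit ideal, a Hensel iteration in the weakly complete ring $C^{\dag}\langle t\rangle^{\dag}$ produces a genuine homomorphism $\Psi\colon A^{\dag}\to C^{\dag}\langle t\rangle^{\dag}$ with $\Psi(x_j)\equiv f(x_j)+tu_j\pmod p$ and $\Psi|_{t=0}=f$, $\Psi|_{t=1}=g$ (the boundary conditions persist through the iteration because each Newton correction is proportional to the current value of $F_i$, which already vanishes at $t=0,1$). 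One then builds the integration homotopy from $\Psi^*$ directly on $\Omega^{\bullet}_{A^{\dag}}$, and your integrality and overconvergence estimates go through unchanged. (Minor side remark: $\Omega^1_{A^{\dag}}$ is projective, not free in general; fortunately that claim is not needed once the interpolation is constructed on $A^{\dag}$ itself.)
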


\begin{proof}
See \cite[Remark 3, p.~205]{MW68}.  
\end{proof}

Our goal is to prove well-definedness of integral Monsky-Washnitzer cohomology.  Proposition~\ref{local well-definedness} proved the result in a special case.  We will check now that this can be reinterpreted as a local result.  In order to ``glue'', we will need certain sheaf properties which are provided by the work of Meredith.

\begin{Lem} \label{covering by complete transversal intersections}
Let $\overline{X}$ denote a nonsingular affine variety over $k$.  Then $\overline{X}$ can be covered by finitely many complete transversal intersections in such a way that all finite intersections are also complete transversal intersections.
\end{Lem}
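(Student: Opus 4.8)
The plan is to reduce to the affine-space situation where complete transversal intersections are easy to produce, and then to check that the relevant property is preserved under the intersections we take. First I would cover $\overline{X}$ by principal open subsets $\overline{X}_f = \Spec \overline{A}[1/f]$; since $\overline{A}$ is a finitely generated $k$-algebra we may choose finitely many such $f_1, \dots, f_m$ that cover $\overline{X}$ (indeed a single chart suffices to cover, but we want the flexibility of many charts). The point of passing to principal opens is that $\overline{A}[1/f] \cong \overline{A}[t]/(tf - 1)$, and more usefully, embedding $\overline{X} \hookrightarrow \mathbb{A}^N_k$ as a closed subscheme and then localizing, each $\overline{X}_f$ becomes a locally closed — in fact, after the standard trick of adjoining one extra coordinate, a \emph{closed} — nonsingular subscheme of an affine space $\mathbb{A}^{N_f}_k$. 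A closed nonsingular subscheme of affine space, cut out set-theoretically by its ideal, is a complete transversal intersection in a neighborhood of each point precisely because nonsingularity says the Jacobian has the expected rank there; the subtlety is that globally the defining ideal may need more than $\mathrm{codim}$ generators. So I would first verify that after possibly shrinking each $\overline{X}_f$ further (refining the cover), each piece is genuinely a complete transversal intersection on the nose, using the Jacobian criterion together with the fact that an ideal which is locally generated by a regular sequence of the right length can, on a small enough principal open, be generated by such a sequence.

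Next I would handle the intersections. A finite intersection of principal opens $\overline{X}_{f_{i_1}} \cap \cdots \cap \overline{X}_{f_{i_r}}$ equals $\overline{X}_g$ where $g = f_{i_1}\cdots f_{i_r}$, so it is again a principal open of $\overline{X}$, hence again embeds as a closed nonsingular subscheme of an affine space. The key observation is that if $\overline{X}_f = \Spec \overline{A}[1/f]$ is presented as $\overline{A}[x_1,\dots,x_n]/(\overline{F}_1,\dots,\overline{F}_s)$ realizing it as a complete transversal intersection over $\overline{A}$, then for any further localization at some $\overline{h} \in \overline{A}[1/f]$ we may add the relation $x_{n+1}\overline{h} - 1$; the enlarged Jacobian matrix is block-triangular with the new block being the $1\times 1$ matrix $(\overline{h})$, which is a unit after inverting $\overline{h}$, so the complete-transversal-intersection property propagates. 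Thus once the cover is chosen so that each member \emph{and each pairwise intersection} is a principal open presented this way, all higher intersections (being principal opens of the pairwise ones) automatically inherit the property.

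The main obstacle is the first step: arranging that each chart is \emph{exactly} a complete transversal intersection rather than merely one "up to shrinking," and doing so with only finitely many charts while keeping control of the intersections. Concretely, one starts with a closed embedding $\overline{X} \hookrightarrow \mathbb{A}^N$ with ideal $I$; nonsingularity gives, for each point $x$, a principal open on which $I$ is generated by $c = \mathrm{codim}$ of the $F_i$'s with independent differentials, but the opens and the chosen generators vary with $x$. One then uses quasi-compactness of $\overline{X}$ to extract a finite subcover; the generators on overlaps need not match, but that is fine because we only require each \emph{individual} chart to be a complete transversal intersection, and the overlaps are handled by the localization trick of the previous paragraph, which does not care which presentation we started from. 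I would also note that replacing the original affine cover by the cover consisting of all charts \emph{together with all their finite intersections} is harmless since there are only finitely many such intersections; closing up the cover under intersection is what makes the ``all finite intersections are complete transversal intersections'' clause hold, given that we have already shown it for single charts and that intersections of charts are principal opens of charts. The only genuinely delicate verification is that a principal open of a complete transversal intersection is again one, which is the block-triangular Jacobian computation sketched above.
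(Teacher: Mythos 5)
Your proposal is correct and follows the same basic strategy as the paper's proof: cover $\overline{X}$ by complete transversal intersections using the Jacobian criterion, reduce all finite intersections to distinguished opens of the charts, and use the fact that a distinguished open of a complete transversal intersection is again one (your block-triangular Jacobian computation with the extra relation $x_{n+1}\overline{h}-1$ is exactly the verification the paper leaves implicit). The one organizational difference is that the paper first produces an arbitrary cover by complete transversal intersections and then invokes Nike's Lemma to refine it to a cover by opens that are simultaneously distinguished in $\overline{X}$ and in the charts; you instead arrange from the outset that every chart is a principal open $\overline{X}_f$ of $\overline{X}$ itself, so that finite intersections $\overline{X}_{f_{i_1}}\cap\cdots\cap\overline{X}_{f_{i_r}}=\overline{X}_{f_{i_1}\cdots f_{i_r}}$ are automatically principal opens of each chart and no refinement lemma is needed. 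This costs you the extra (standard, but not entirely free) verification that each $\overline{X}_f$ can be shrunk to a genuine complete transversal intersection while remaining a principal open of $\overline{X}$, which is the step Nike's Lemma packages away in the paper's version; both routes are sound and of comparable length.
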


\begin{proof}
We can cover $\overline{X}$ by complete transversal intersections by the Jacobian criterion.  It is well-known (often under the name Nike's Lemma) that we can cover $\overline{X}$ by open affines which are distinguished opens in both $\overline{X}$ and in the complete transversal intersections.  The result now follows from the fact that a distinguished open within a complete transversal intersection is itself a complete transversal intersection.
\end{proof}

\begin{Lem} \label{finiteness of pieces}
Let $\Spec \overline{A}$ denote a nonsingular affine variety over $k$ and let $A^{\dag}$ be an associated dagger algebra as above.  Then $\Omega^i_{A^{\dag}/W(k)}$ is a finitely generated $A^{\dag}$-module for any $i$.  
\end{Lem}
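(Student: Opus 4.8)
\emph{Proof plan.} The plan is to deduce this directly from the construction of $\Omega^1_{A^{\dag}/W(k)}$ given above, together with the fact that exterior powers of finitely generated modules are finitely generated. First I would fix a presentation $A \cong W(k)[x_1,\ldots,x_n]/\mathfrak{a}$ with $n$ finite. This is legitimate because $\overline{A}$ is of finite type over $k$ and the lift $A$ may be taken smooth, hence of finite type, over $W(k)$ by \cite[Theorem~6]{elkik:lifting}; moreover this is precisely the shape of presentation used above to define both $A^{\dag}$ and $\Omega^1_{A^{\dag}/W(k)}$.

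With such a presentation fixed, recall that by definition $\Omega^1_{A^{\dag}/W(k)}$ is a quotient of $A^{\dag}\otimes_{W(k)\langle\overline{x}\rangle^{\dag}}\Omega^1_{W(k)\langle\overline{x}\rangle^{\dag}/W(k)}$, and the latter is the free $A^{\dag}$-module on $dx_1,\ldots,dx_n$. Since a quotient of a finitely generated module is finitely generated, $\Omega^1_{A^{\dag}/W(k)}$ is generated over $A^{\dag}$ by the images of $dx_1,\ldots,dx_n$. Consequently $\Omega^i_{A^{\dag}/W(k)} = \Lambda^i_{A^{\dag}}\Omega^1_{A^{\dag}/W(k)}$ is generated over $A^{\dag}$ by the $\binom{n}{i}$ wedges $dx_{j_1}\wedge\cdots\wedge dx_{j_i}$ with $1\le j_1<\cdots<j_i\le n$ (and vanishes for $i>n$), hence is finitely generated. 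The case $i=0$ is trivial since $\Omega^0_{A^{\dag}/W(k)} = A^{\dag}$.

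I do not expect any real obstacle here: the assertion is a formal consequence of the fact that $\Omega^1_{A^{\dag}/W(k)}$ is by construction a quotient of a finite free $A^{\dag}$-module. The only point that needs a word of justification is that one is entitled to a \emph{finite} presentation of $A$ over $W(k)$, which, as noted, follows from finite-typeness of the lift; alternatively one could invoke the relevant finiteness statements in Meredith's work \cite{Mer71}.
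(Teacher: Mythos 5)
Your argument is correct, but it takes a different route from the paper, which simply cites \cite[Theorem~4.5]{MW68}. Your proof is the elementary, self-contained one: since the paper defines $\Omega^1_{A^{\dag}/W(k)}$ as a quotient of $A^{\dag}\otimes_{W(k)\langle\overline{x}\rangle^{\dag}}\Omega^1_{W(k)\langle\overline{x}\rangle^{\dag}/W(k)}$, which is free of rank $n$ over $A^{\dag}$ once a finite presentation $A \cong W(k)[\overline{x}]/\mathfrak{a}$ is fixed, finite generation of $\Omega^1$ and of its exterior powers is immediate, and your justification that a finite presentation exists (finite-typeness of the smooth lift) is the right one. The citation the paper uses buys more than finite generation --- Monsky and Washnitzer's Theorem~4.5 identifies this module as a finitely generated \emph{projective} $A^{\dag}$-module and ties it to an intrinsic characterization independent of the presentation --- but none of that extra strength is needed for the lemma as stated, so your direct derivation from the definitions is a perfectly valid and arguably more transparent substitute.
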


\begin{proof}
See \cite[Theorem~4.5]{MW68}.
\end{proof}

\begin{Def} \label{MW sheaf definition}
Let $\mathscr{F}^i_{A^{\dag}}$ denote the presheaf on $\Spec \overline{A}$ associated to $\Omega^i_{A^{\dag}/W(k)}$ as in \cite[Definition~4]{Mer71}.
\end{Def}

\begin{Rmk}
Note that the presheaf $\mathscr{F}^i_{A^{\dag}}$ defined in Definition~\ref{MW sheaf definition} depends on $A^{\dag}$, not only on $\overline{A}$.
\end{Rmk}

\begin{Lem} \label{MW acyclic}
The presheaf $\mathscr{F}^i_{A^{\dag}}$ defined above is a sheaf.  For any $i$ and for any $j > 0$, the sheaf cohomology $H^j(\Spec \overline{A}, \mathscr{F}^i_{A^{\dag}}) = 0$.
\end{Lem}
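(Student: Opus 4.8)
The plan is to reduce everything to a statement about the affine base $\Spec \overline{A}$ together with the structural work already done by Meredith in \cite{Mer71}. First I would recall that Meredith's presheaf construction in \cite[Definition~4]{Mer71} is designed precisely so that, for a finitely generated $A^{\dag}$-module $M$, the associated presheaf $\mathscr{F}$ satisfies $\mathscr{F}(\Spec \overline{A}_{\overline f}) = M \otimes_{A^{\dag}} (A^{\dag})_f^{\dag}$ on distinguished opens, where $(A^{\dag})_f^{\dag}$ is the weak completion of the localization. By Lemma~\ref{finiteness of pieces}, each $\Omega^i_{A^{\dag}/W(k)}$ is a finitely generated $A^{\dag}$-module, so Meredith's results apply verbatim to $\mathscr{F}^i_{A^{\dag}}$. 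The sheaf property (gluing and separation on distinguished opens, hence on all opens of the affine scheme $\Spec \overline{A}$, which has a basis of distinguished opens) is then exactly \cite[Proposition~2 and the surrounding discussion]{Mer71} or the cited Definition~4 together with the faithful flatness/exactness properties of weak completion of localizations; I would cite this directly rather than reprove it.

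For the vanishing of higher cohomology, the strategy is the standard \v{C}ech argument: since $\Spec \overline{A}$ is a quasi-compact affine scheme and $\mathscr{F}^i_{A^{\dag}}$ is a sheaf whose sections on a distinguished open $D(\overline f)$ are $\Omega^i_{A^{\dag}/W(k)} \otimes_{A^{\dag}} (A^{\dag})^{\dag}_f$, it suffices to show that for any finite distinguished affine cover the \v{C}ech complex of $\mathscr{F}^i_{A^{\dag}}$ is exact in positive degrees, and then to invoke the comparison of \v{C}ech and derived functor cohomology on a space with a basis of opens on which all higher \v{C}ech cohomology vanishes (the analogue of Cartan's criterion / the argument behind Serre vanishing for quasi-coherent sheaves on affines). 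Concretely, I would take a cover by $D(\overline f_1), \dots, D(\overline f_r)$ with $(\overline f_1, \dots, \overline f_r) = \overline{A}$, lift the $\overline f_j$ to elements $f_j \in A^{\dag}$, and observe that $(f_1, \dots, f_r)$ generates the unit ideal in $A^{\dag}$ (since it does modulo the Jacobson radical, $A^{\dag}$ being $p$-adically complete — or rather $p$-adically separated with the relevant completeness inherited from $\widehat A$). Exactness of the resulting \v{C}ech–Koszul-type complex then follows because localization followed by weak completion is an exact functor on finitely generated $A^{\dag}$-modules and behaves well with respect to such partitions of unity; this is again essentially contained in \cite{Mer71}, and I would cite \cite[Theorem~1 or Proposition~1.3]{Mer71} for the exactness of $A^{\dag} \to \prod_j (A^{\dag})^{\dag}_{f_j}$ and its higher analogues.

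The main obstacle I anticipate is not conceptual but bookkeeping: verifying that weak completion commutes with the localizations and tensor products appearing in the \v{C}ech complex in the precise sense needed, i.e. that $\bigl(\Omega^i_{A^{\dag}} \otimes_{A^{\dag}} (A^{\dag})^{\dag}_f\bigr)$ really is computed by Meredith's presheaf and that the \v{C}ech differentials are the expected ones. The subtlety is that $A^{\dag} \to (A^{\dag})^{\dag}_f$ is a weak completion of a localization, not a plain localization, so one must be careful that the exactness statements from \cite{Mer71} (flatness, the sheaf axiom, acyclicity) are exactly the ones quoted and that finite generation is preserved at every stage of the cover — which it is, by Lemma~\ref{finiteness of pieces} applied to each piece of the cover, since each $D(\overline f_j)$ and each finite intersection is again a nonsingular affine variety with an associated dagger algebra. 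Once these identifications are in place, the vanishing $H^j(\Spec \overline{A}, \mathscr{F}^i_{A^{\dag}}) = 0$ for $j > 0$ drops out of the \v{C}ech-to-derived-functor spectral sequence exactly as in the quasi-coherent case.
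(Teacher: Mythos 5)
Your argument is correct and is essentially the route the paper takes: the paper's proof consists of invoking Lemma~\ref{finiteness of pieces} together with \cite[Theorem~14]{Mer71}, which packages precisely the sheaf property and acyclicity for Meredith's presheaves attached to finitely generated modules over a dagger algebra that you reconstruct by hand. The only difference is that you sketch the internals of Meredith's theorem (the \v{C}ech--Koszul argument and the exactness properties of weak completions of localizations) where the paper cites it as a black box.
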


\begin{proof}
This follows from Lemma~\ref{finiteness of pieces} and \cite[Theorem~14]{Mer71}.
\end{proof}

\begin{Lem} \label{hypercohomology interpretation}
For any nonsingular $\Spec \overline{A}$, we have an isomorphism between the cohomology groups of the Monsky-Washnitzer complex $H^i(\Omega^{\bullet}_{A^{\dag}/W(k)})$ and the hypercohomology groups $\HH^i(\Spec \overline{A}, \mathscr{F}^{\bullet}_{A^{\dag}})$.  
\end{Lem}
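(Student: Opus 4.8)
The plan is to show that both sides compute the cohomology of the complex of global sections $\Gamma\bigl(\Spec \overline{A},\mathscr{F}^{\bullet}_{A^{\dag}}\bigr)$, using the hypercohomology spectral sequence together with the acyclicity established in Lemma~\ref{MW acyclic}. First I would record two formal properties of Meredith's presheaf construction from Definition~\ref{MW sheaf definition} (i.e.\ \cite[Definition~4]{Mer71}): (i) the assignment $M \mapsto \mathscr{F}_M$ is functorial in the $A^{\dag}$-module $M$, so that the differentials $d\colon \Omega^i_{A^{\dag}/W(k)} \to \Omega^{i+1}_{A^{\dag}/W(k)}$ induce morphisms $\mathscr{F}^i_{A^{\dag}} \to \mathscr{F}^{i+1}_{A^{\dag}}$ and hence make $\mathscr{F}^{\bullet}_{A^{\dag}}$ an honest complex of sheaves (no sheafification is needed, since each term is already a sheaf by Lemma~\ref{MW acyclic}); and (ii) evaluating the presheaf on the distinguished open $D(1) = \Spec \overline{A}$ returns the original module, so that $\Gamma\bigl(\Spec \overline{A},\mathscr{F}^i_{A^{\dag}}\bigr) = \Omega^i_{A^{\dag}/W(k)}$ compatibly with the differentials, whence
\[
H^i\bigl(\Gamma(\Spec \overline{A},\mathscr{F}^{\bullet}_{A^{\dag}})\bigr) = H^i\bigl(\Omega^{\bullet}_{A^{\dag}/W(k)}\bigr).
\]

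Next I would invoke the first hypercohomology spectral sequence of the bounded-below complex of sheaves $\mathscr{F}^{\bullet}_{A^{\dag}}$,
\[
E_1^{p,q} = H^q\bigl(\Spec \overline{A},\mathscr{F}^p_{A^{\dag}}\bigr) \Longrightarrow \HH^{p+q}\bigl(\Spec \overline{A},\mathscr{F}^{\bullet}_{A^{\dag}}\bigr).
\]
By Lemma~\ref{MW acyclic} we have $E_1^{p,q} = 0$ for all $q > 0$, so the spectral sequence is concentrated on the row $q = 0$ and degenerates; the resulting edge isomorphism reads $\HH^i\bigl(\Spec \overline{A},\mathscr{F}^{\bullet}_{A^{\dag}}\bigr) \cong H^i\bigl(\Gamma(\Spec \overline{A},\mathscr{F}^{\bullet}_{A^{\dag}})\bigr)$. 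Combined with property (ii) this gives the claimed isomorphism $H^i\bigl(\Omega^{\bullet}_{A^{\dag}/W(k)}\bigr) \cong \HH^i\bigl(\Spec \overline{A},\mathscr{F}^{\bullet}_{A^{\dag}}\bigr)$. (Equivalently one may argue directly that a bounded-below complex of $\Gamma$-acyclic sheaves computes its hypercohomology via the complex of global sections, e.g.\ by passing to a Cartan--Eilenberg resolution; the spectral sequence is simply a convenient packaging of this standard fact. Note that the finite generation input of Lemma~\ref{finiteness of pieces} is exactly what makes Lemma~\ref{MW acyclic}, and hence the degeneration, available in each degree $i$.)

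The cohomological comparison above is entirely routine; the one point genuinely requiring care — and the step I would expect to be the main obstacle — is establishing properties (i) and (ii), i.e.\ extracting from \cite{Mer71} enough of the formal behaviour of the presheaf $M \mapsto \mathscr{F}_M$ (functoriality, and agreement of its value on the whole space with $M$) to know that it carries the Monsky--Washnitzer complex to a genuine complex of sheaves with the expected complex of global sections. Once this is in place, the argument is the usual acyclic-resolution computation of hypercohomology and poses no further difficulty.
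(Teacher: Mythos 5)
Your proposal is correct and follows essentially the same route as the paper: the paper's proof simply cites Lemma~\ref{MW acyclic} for the vanishing $H^j(\Spec \overline{A}, \mathscr{F}^i_{A^{\dag}}) = 0$ and then invokes the hypercohomology spectral sequence, exactly as you do. The extra care you take in spelling out the functoriality of Meredith's presheaf construction and the identification of global sections with $\Omega^i_{A^{\dag}/W(k)}$ is left implicit in the paper but is the same argument.
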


\begin{proof}
By Lemma~\ref{MW acyclic},  $H^j(\Spec \overline{A}, \mathscr{F}^i_{A^{\dag}}) = 0$; the result thus follows from the hypercohomology spectral sequence \cite[Application~5.7.10]{Weibel:introductionToHomological}.
\end{proof}

\begin{Lem} \label{spectral sequence rectangle}
Fix $n \geq 0$.  Let $f\colon E \to \widetilde{E}$ be a morphism of first quadrant spectral sequences such that the induced map $f\colon E_2^{p,q} \to  \widetilde{E}_2^{p,q}$ is an isomorphism for all $p$ and all $q \leq n$.  
Then the induced map $E_{\infty}^{p+q} \to \widetilde{E}_{\infty}^{p+q}$ is an isomorphism
for $p+q \leq n$.  
\end{Lem}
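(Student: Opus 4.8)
The plan is to prove the statement by a direct filtration-by-columns argument, comparing the two spectral sequences page by page and tracking which differentials and which subquotients are controlled by the hypothesis. The hypothesis is that $f\colon E_2^{p,q}\to\widetilde E_2^{p,q}$ is an isomorphism whenever $q\le n$ (and all $p\ge 0$), and both spectral sequences live in the first quadrant, so $E_r^{p,q}=0$ unless $p,q\ge 0$. The conclusion is about the associated graded of the abutment in total degree $\le n$, i.e.\ about $E_\infty^{p,q}$ for $p+q\le n$; note $p+q\le n$ forces $q\le n$, so every relevant term sits in the range where the hypothesis applies on page $2$.

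First I would show by induction on $r\ge 2$ that $f\colon E_r^{p,q}\to\widetilde E_r^{p,q}$ is an isomorphism for all $p\ge 0$ and all $q\le n$. The base case $r=2$ is the hypothesis. For the inductive step, recall that $E_{r+1}^{p,q}$ is the homology of the complex
\[
E_r^{p-r,q+r-1}\xrightarrow{\,d_r\,} E_r^{p,q}\xrightarrow{\,d_r\,} E_r^{p+r,q-r+1}.
\]
Here is the key point: in a first-quadrant spectral sequence $d_r$ has bidegree $(r,-r+1)$, so the \emph{incoming} differential originates at $E_r^{p-r,q+r-1}$, whose second index $q+r-1$ is $\ge q+1 > q$ once $r\ge 2$ — this is \emph{larger} than $n$ in general, so the inductive hypothesis does \emph{not} directly give that $f$ is an isomorphism on the source of the incoming differential. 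This is the main obstacle, and it is why the statement is about $p+q\le n$ rather than about the full diagonal or about $E_\infty^{p,q}$ for all $q\le n$. I would handle it by being more careful: strengthen the inductive claim to assert that $f\colon E_r^{p,q}\to\widetilde E_r^{p,q}$ is an isomorphism whenever $q\le n$, \emph{and} moreover is \emph{surjective} with controlled kernel — or, more cleanly, prove the refined statement that $f$ is an isomorphism on $E_r^{p,q}$ for $q\le n$ and is injective on $E_r^{p,q}$ for $q\le n+1$. Then the incoming differential into $E_r^{p,q}$ with $q\le n$ has source in total second-index $q+r-1$; when $r=2$ this is $q+1\le n+1$, where we have injectivity, and a short diagram chase (five lemma style) on the homology of the three-term complexes, using that $f$ commutes with $d_r$, upgrades isomorphism-on-$E_2$ to isomorphism-on-$E_3$, and so on. One must check the bookkeeping carefully so that at each page the needed control (iso below, mono one step higher) is propagated; the first-quadrant vanishing is used to kill the differentials once $r$ is large.

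Alternatively, and perhaps more transparently, I would avoid the subtle strengthening by noting that for fixed $p,q$ with $p+q\le n$, the spectral sequence degenerates at a finite page at that spot: since $E_r^{p+r,q-r+1}=0$ for $r>q$ (first quadrant, as then $q-r+1<0$) and $E_r^{p-r,q+r-1}=0$ for $r>p$, we have $E_r^{p,q}=E_\infty^{p,q}$ for $r>\max(p,q)$. So it suffices to prove the induction only up to that finite stage, and at every page $r\ge 2$ encountered, the incoming differential $d_r\colon E_r^{p-r,q+r-1}\to E_r^{p,q}$ has $p-r<0$ hence \emph{vanishes} whenever $r>p-r$... — no, that is not automatic. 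The honest fix remains the refined induction above. Once the page-level isomorphisms are established for $q\le n$ through the stabilization page, we get $f\colon E_\infty^{p,q}\xrightarrow{\sim}\widetilde E_\infty^{p,q}$ for all $p+q\le n$.

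Finally I would pass from the associated graded to the abutment. Write $E_\infty^{p+q}$ for the abutment in total degree $p+q$ (this is the notation in the statement). The spectral sequence equips $E_\infty^{m}$ with a finite filtration whose graded pieces are the $E_\infty^{p,q}$ with $p+q=m$; for $m\le n$ every such graded piece has $q\le n$, so $f$ is an isomorphism on each graded piece, and since the filtrations are finite and $f$ is filtered, a downward induction on filtration length (repeatedly applying the five lemma to the short exact sequences relating successive filtration steps) shows $f\colon E_\infty^{m}\to\widetilde E_\infty^{m}$ is an isomorphism for all $m\le n$. The only real subtlety in the whole argument is the one flagged above — that the incoming $d_r$ differentials pull in information from the line $q=n+1$, so the inductive hypothesis must be set up to carry injectivity one row beyond where it carries bijectivity — and once that is arranged the rest is routine diagram chasing plus first-quadrant vanishing.
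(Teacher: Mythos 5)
Your overall strategy --- a page-by-page induction using the three-term complexes that compute $E_{r+1}$ from $E_r$, followed by a finite-filtration argument to pass from the $E_\infty^{p,q}$ to the abutment --- is the same as the paper's (which compresses the induction into one sentence), and your final paragraph on the abutment is fine. The gap is that your refined inductive hypothesis is set up backwards. For the induced map on the homology of $A \xrightarrow{d_r} B \xrightarrow{d_r} C$ to be an isomorphism, the four/five lemma requires $f$ to be \emph{surjective} on the incoming source $A$ and \emph{injective} on the outgoing target $C$ (together with an isomorphism on $B$); injectivity on $A$ buys you nothing. (Take $A = 0$, $\widetilde A = \ZZ$, $B = \widetilde B = \ZZ$, $C = \widetilde C = 0$, with $\widetilde{d}\colon \widetilde A \to \widetilde B$ the identity: then $f_A$ is injective and $f_B, f_C$ are isomorphisms, yet the map on homology at $B$ is $\ZZ \to 0$.) Consequently your hypothesis ``isomorphism for $q\le n$, injective for $q\le n+1$'' does not propagate: already for $E_3^{p,n}$ with $p\ge 2$ you would need surjectivity of $f$ on $E_2^{p-2,\,n+1}$, which you do not have; and the base case for injectivity on the line $q=n+1$ is not supplied by the hypothesis of the lemma either.

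The fix is to run the isomorphism claim on the triangle $p+q\le n$ rather than on the strip $q\le n$, and then the difficulty you flagged with the incoming differential evaporates: if the source $E_r^{p-r,\,q+r-1}$ is nonzero then $p\ge r$, and combined with $p+q\le n$ this forces $q+r-1\le n-1$ and total degree $p+q-1\le n-1$, so the source stays inside the triangle, where the inductive hypothesis already gives an isomorphism and hence surjectivity. The place where extra bookkeeping is genuinely needed is the one you did not address: the outgoing target $E_r^{p+r,\,q-r+1}$ has total degree $p+q+1$, which can equal $n+1$ and so leaves the triangle. There one carries \emph{injectivity} of $f$ as the auxiliary inductive statement, say on the region $\{p+q\le n+1,\ q\le n\}$; this holds at $r=2$ by hypothesis (since $q\le n$) and propagates because injectivity on homology at $B$ needs only injectivity at $B$ together with surjectivity at its incoming source, whose total degree is at most $n$ and which therefore lies back in the triangle or vanishes. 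With the roles of injectivity and surjectivity corrected in this way, your argument closes up and proves the lemma.
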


\begin{proof}
For each $i$ and $p+q \leq n$, the hypotheses and an inspection of the domain and codomain of the differentials imply that the map $E_i^{p,q} \to  \widetilde{E}_i^{p,q}$ is an isomorphism.
\end{proof}

\begin{Rmk}
The condition in Lemma~\ref{spectral sequence rectangle} concerning bounds on the index $q$ will not be needed until Section~\ref{OdRW section}.
\end{Rmk}

To deduce a global result from our local results, we will need the following \v{C}ech spectral sequence for computing the hypercohomology of a complex of sheaves.  

\begin{Prop} \label{Cech spectral sequence}
Let $X$ denote a topological space, let $\mathscr{U}$ denote an open covering of $X$, and let $\mathscr{F}^{\bullet}$ denote a complex of sheaves.  Let $\mathscr{H}^{q}$ denote the presheaf on $X$ for which $\mathscr{H}^q(U) := \mathbb{H}^{q}(U, \mathscr{F}^{\bullet})$.  Then there is a first quadrant spectral sequence with
\[
E_2^{p,q} := \check{H}^p \left( \mathscr{U}, \mathscr{H}^q \right)
\]
and converging to $\HH^{p+q}(X,\mathscr{F}^{\bullet}).$
\end{Prop}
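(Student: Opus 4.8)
The statement is classical (it is recorded, for instance, in Godement's book and is the complex-of-sheaves version of the usual \v{C}ech-to-derived-functor spectral sequence); I sketch the standard double-complex argument. Assume $\mathscr{F}^\bullet$ is concentrated in nonnegative degrees, as will be the case in our applications (otherwise shift). Choose a resolution $\mathscr{F}^\bullet \to L^\bullet$ by a complex of flasque sheaves which computes hypercohomology on every open subset of $X$ — for example, apply the Godement canonical flasque resolution to each $\mathscr{F}^q$ and totalize the resulting double complex of sheaves. Then each $L^q$ is flasque, $\mathscr{F}^\bullet \to L^\bullet$ is a quasi-isomorphism, and for every open $U \subseteq X$ one has $\HH^q(U, \mathscr{F}^\bullet) \cong H^q\bigl(\Gamma(U, L^\bullet)\bigr)$; in particular $\mathscr{H}^q(U) = H^q\bigl(\Gamma(U, L^\bullet)\bigr)$. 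Now form the first-quadrant double complex
\[
K^{p,q} := \check{C}^p(\mathscr{U}, L^q) = \prod_{\alpha_0 < \cdots < \alpha_p} \Gamma\bigl(U_{\alpha_0 \cdots \alpha_p}, L^q\bigr),
\]
with $p$ the \v{C}ech degree and $q$ the resolution degree, and consider its two spectral sequences, both abutting to $H^{p+q}\bigl(\mathrm{Tot}(K^{\bullet\bullet})\bigr)$.

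Running the \v{C}ech differential first: for fixed $q$ the sheaf $L^q$ is flasque, so $\check{H}^p(\mathscr{U}, L^q) = 0$ for $p > 0$, while $\check{H}^0(\mathscr{U}, L^q) = \Gamma(X, L^q)$ by the sheaf axiom. Hence this spectral sequence collapses on the $E_2$-page onto the column $p = 0$, with $E_2^{0,q} = H^q\bigl(\Gamma(X, L^\bullet)\bigr) = \HH^q(X, \mathscr{F}^\bullet)$, so that $H^n\bigl(\mathrm{Tot}(K^{\bullet\bullet})\bigr) \cong \HH^n(X, \mathscr{F}^\bullet)$. Running the resolution differential first: for fixed $p$, taking cohomology of the complex $q \mapsto K^{p,q}$ and using that cohomology of complexes of abelian groups commutes with the (possibly infinite) products occurring in the \v{C}ech complex gives
\begin{align*}
E_1^{p,q}
&= \prod_{\alpha_0 < \cdots < \alpha_p} H^q\bigl(\Gamma(U_{\alpha_0 \cdots \alpha_p}, L^\bullet)\bigr) \\
&= \prod_{\alpha_0 < \cdots < \alpha_p} \HH^q\bigl(U_{\alpha_0 \cdots \alpha_p}, \mathscr{F}^\bullet\bigr)
= \check{C}^p(\mathscr{U}, \mathscr{H}^q),
\end{align*}
and, by naturality of the Godement resolution with respect to restriction maps, the horizontal differential on $E_1^{\bullet,q}$ is the \v{C}ech differential of the presheaf $\mathscr{H}^q$. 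Therefore $E_2^{p,q} = \check{H}^p(\mathscr{U}, \mathscr{H}^q)$, and this spectral sequence converges to $H^{p+q}\bigl(\mathrm{Tot}(K^{\bullet\bullet})\bigr) \cong \HH^{p+q}(X, \mathscr{F}^\bullet)$, which is the assertion. First-quadrantness is clear, since both $L^\bullet$ and the \v{C}ech complex of $\mathscr{U}$ live in nonnegative degrees.

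The only points needing attention — none serious — are the existence of a flasque resolution of the complex $\mathscr{F}^\bullet$ that computes hypercohomology compatibly on all opens (handled by the termwise Godement construction, since $\mathscr{F}^\bullet$ is bounded below), the vanishing $\check{H}^{p}(\mathscr{U}, L^q) = 0$ for $p > 0$ and $L^q$ flasque, and the commutation of cohomology with products. I expect the identification of the $E_1$-differential with the \v{C}ech differential of $\mathscr{H}^q$ to be the most delicate bookkeeping step, but it follows formally from functoriality of the chosen resolution. Alternatively, rather than building the double complex by hand one may simply quote the \v{C}ech hypercohomology spectral sequence from a standard reference (e.g. Godement, or by combining \cite[Application~5.7.10]{Weibel:introductionToHomological} with the ordinary \v{C}ech spectral sequence), but the argument above is short enough to record directly.
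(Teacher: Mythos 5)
Your argument is correct: the termwise Godement resolution, the \v{C}ech double complex $\check{C}^p(\mathscr{U},L^q)$, the collapse of one spectral sequence via \v{C}ech-acyclicity of flasque sheaves, and the identification of the other $E_2$-page with $\check{H}^p(\mathscr{U},\mathscr{H}^q)$ is the standard proof of this statement, and the points you flag (commutation of cohomology with products, bounded-belowness, functoriality of the resolution) are exactly the ones that need flagging. The paper itself gives no argument here --- its ``proof'' is a citation to \cite[Tag 01GY]{stacks-project} --- so you have simply supplied the standard proof of the cited fact rather than diverged from the paper's approach.
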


\begin{proof}
See \cite[\href{http://stacks.math.columbia.edu/tag/01GY}{Tag 01GY}]{stacks-project}.
%For the version without complexes, see \cite[Theorem 5.4.1, p.~213]{Godemont:sheaves}; for the version with complexes, see \cite[\href{http://stacks.math.columbia.edu/tag/01GY}{Tag 01GY}]{stacks-project}.
\end{proof}

We now come to the proof that integral Monsky-Washnitzer cohomology is well-defined.
\begin{proof}[Proof of Theorem~\ref{main theorem}(\ref{MW part})]
By Proposition~\ref{maps lift}, there exists a map $\varphi\colon A^{\dag} \rightarrow A'^{\dag}$ lifting the identity on $\overline{A}$.  Fix a covering $\mathscr{U}$ of $\Spec \overline{A}$ as in Lemma~\ref{covering by complete transversal intersections}.  Let $E$ (resp., $E'$) denote the spectral sequence in Proposition~\ref{Cech spectral sequence} corresponding to the covering $\mathscr{U}$ and the complex of sheaves $\mathscr{F}^{\bullet}_{A^{\dag}}$ (resp., $\mathscr{F}^{\bullet}_{A'^{\dag}}$).  By Proposition~\ref{local well-definedness}, the map $\varphi$ induces a map of spectral sequences and isomorphisms $E_2^{p,q} \xrightarrow{\sim}  {E'}_2^{p,q}$ for all $p,q$.  Then by Lemma~\ref{spectral sequence rectangle} the map $\varphi$ induces an isomorphism $E_{\infty}^{p+q} \xrightarrow{\sim}  {E'}_{\infty}^{p+q}$.
Hence the induced map
\[
\varphi \colon \HH^{p+q}\left(X,\mathscr{F}_{A^{\dag}}^{\bullet}\right) \to \HH^{p+q}\left(X,\mathscr{F}_{A'^{\dag}}^{\bullet}\right)
\]
is an isomorphism.  We are then finished by Lemma~\ref{hypercohomology interpretation}.
\end{proof}

\section{Comparison with overconvergent de\thinspace Rham-Witt cohomology} \label{OdRW section}

In this section, we prove Theorem~\ref{main theorem}(\ref{small degree part}).  As in Section~\ref{MW section}, let $\overline{X} = \Spec \overline{A}$ denote a nonsingular affine variety over a perfect field $k$ of characteristic $p$.  
We will first define a (non-canonical) comparison morphism between the integral Monsky-Washnitzer complex $\Omega^{\bullet}_{A^{\dag}/W(k)}$ and the overconvergent de\thinspace Rham-Witt complex $W^{\dag} \Omega^{\bullet}_{\overline{A}}$.  
We will then find a covering of $\overline{X}$ over which this comparison morphism induces an isomorphism of certain cohomology groups. Lastly we will use Proposition~\ref{Cech spectral sequence} and Lemma~\ref{spectral sequence rectangle} to deduce the result for $\overline{X}$.  

Let $\Spec \overline{A}$ denote a nonsingular affine variety, and let $A^{\dag}$ denote a weakly complete lift of $\overline{A}$, as in Section~\ref{MW section}.  By Proposition~\ref{maps lift}, there exists a lift of Frobenius $F\colon A^{\dag} \rightarrow A^{\dag}$.  

\begin{Prop} \label{comparison functoriality}
Let $\overline{A},\,A^{\dag},\,F$ be as above.  There exists a unique ring homomorphism 
\[
s_F\colon A^{\dag} \rightarrow W(A^{\dag})
\]
such that, for any $a \in A^{\dag}$, the ghost components of $s_F(a)$ are $(a, F(a), F^2(a), \ldots)$.  This map is functorial in the sense that if we have $\overline{A}',\,A'^{\dag},\,F'$ as above and a map $\varphi$ such that the left-hand square in the following diagram commutes, then the right-hand square also commutes:
\[
\xymatrix{
A^{\dag} \ar[r]^{F} \ar[d]^{\varphi} & A^{\dag} \ar[d]^{\varphi} \ar[r]^{s_{F}} & W(A^{\dag}) \ar[d]^{W(\varphi)}\\
A'^{\dag} \ar[r]^{F'} & A'^{\dag} \ar[r]^{s_{F'}} & W(A'^{\dag}).
}
\]
\end{Prop}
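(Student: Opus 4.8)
The plan is to construct $s_F$ concretely via the ghost map and a standard "lift of Frobenius gives a section of Witt vectors" argument, and then check functoriality by a uniqueness argument. First I would recall that for a $p$-torsion-free, $p$-adically complete ring $R$ equipped with a ring endomorphism $\phi$ lifting the Frobenius modulo $p$, there is a unique ring homomorphism $s_\phi\colon R \to W(R)$ whose ghost components are $(a, \phi(a), \phi^2(a), \ldots)$; this is sometimes phrased as saying that $(R,\phi)$ is a $\delta$-ring, and the map $s_\phi$ is the associated section of the ghost-component-zero projection $W(R) \to R$. The key input is that $A^\dag$ is $p$-torsion-free (it is a subring of the $p$-adically separated ring $\widehat A$, which is $p$-torsion-free since $A$ is flat over $W(k)$ and $W(k)$ is a domain of characteristic $0$) and that $F$ lifts Frobenius modulo $p$ by construction. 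Under these hypotheses the ghost map $w\colon W(A^\dag) \to \prod_{n\ge 0} A^\dag$ is injective, so there is at most one candidate for $s_F(a)$, namely the unique Witt vector with ghost components $(a, F(a), F^2(a), \ldots)$ if it exists; existence is precisely the classical criterion (see e.g. the discussion of Witt vectors and Frobenius lifts in Illusie, or Lazard's lemma) that the sequence $(w_n)_{n\ge 0} = (F^n(a))_{n\ge 0}$ lies in the image of the ghost map, which holds iff $w_n \equiv w_{n-1}^{p} \pmod{p^n}$ for all $n$ — and this congruence follows inductively from $F(b) \equiv b^p \pmod{p}$ applied to $b = F^{n-1}(a)$ together with the fact that $F$ is a ring homomorphism.

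The second step is to verify that $a \mapsto s_F(a)$, defined componentwise by the above, is actually a ring homomorphism. This is immediate from injectivity of the ghost map together with the fact that the ghost map is a ring homomorphism: the ghost components of $s_F(a)s_F(b)$ and of $s_F(ab)$ agree (both are $(F^n(a)F^n(b))_n = (F^n(ab))_n$ since $F$ is multiplicative), and similarly for addition and for $1 \mapsto (1,1,\ldots)$; hence $s_F(ab) = s_F(a)s_F(b)$, etc. One should also note that $s_F$ is $p$-adically continuous, which again follows from injectivity of the ghost map and continuity of $F$ — though in fact for the statement as given only the ring homomorphism property is asserted.

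For functoriality, suppose the left-hand square commutes, i.e. $\varphi \circ F = F' \circ \varphi$. I would show $W(\varphi) \circ s_F = s_{F'} \circ \varphi$ by comparing ghost components, using that the ghost map for $A'^\dag$ is injective. The $n$-th ghost component of $W(\varphi)(s_F(a))$ is $\varphi$ applied to the $n$-th ghost component of $s_F(a)$, which is $\varphi(F^n(a)) = (F')^n(\varphi(a))$ by induction on $n$ using the commuting square; and the $n$-th ghost component of $s_{F'}(\varphi(a))$ is exactly $(F')^n(\varphi(a))$. Since these agree for all $n$ and the ghost map of $A'^\dag$ is injective, the two Witt vectors are equal, giving commutativity of the right-hand square.

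The main obstacle is really just making sure the foundational facts about $A^\dag$ are in place — specifically that $A^\dag$ is $p$-torsion-free so that the ghost map is injective, and that the defining description of $A^\dag$ (as the image of $W(k)\langle \overline x\rangle^\dag$, or as a saturated subring of $\widehat A$) guarantees $F$ is a well-defined endomorphism lifting Frobenius; once injectivity of the ghost map is secured, the entire proposition reduces to routine componentwise bookkeeping. I would isolate $p$-torsion-freeness as a short preliminary remark (flatness of $A$ over $W(k)$, hence of $\widehat A$, hence $\widehat A$ is $p$-torsion-free, hence so is its subring $A^\dag$) and then invoke the classical Frobenius-lift criterion for the existence of $s_F$, citing a standard reference rather than reproving it.
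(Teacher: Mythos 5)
Your proposal is correct and follows essentially the same route as the paper, which simply cites Illusie (0.1.3.16) for exactly this classical fact: a Frobenius lift on a $p$-torsion-free ring determines a unique section $s_F$ of the zeroth ghost component, with functoriality checked on ghost components. Your added detail (injectivity of the ghost map from $p$-torsion-freeness of $A^\dag$, the Dwork-style congruence criterion for existence, and the componentwise verification of the commuting square) is a faithful expansion of the standard argument the paper delegates to that reference.
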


\begin{proof}
See \cite[(0.1.3.16)]{Illusie:deRhamWitt}.
\end{proof}

\begin{Def} \label{definition of comparison map}
Keep notation as above.
Let $W^{\dag}\Omega_{\overline{A}}^{\bullet}$ denote the overconvergent de\thinspace Rham-Witt complex, as in \cite[Definition~1.1]{DavisLZ:Rham} and let $W^{\dag} \Omega^{\bullet}_{\overline{X}}$ denote the associated complex of Zariski sheaves on $X$ as in \cite[Corollary~1.6]{DavisLZ:Rham} .  Write $t_F$ for the composition $A^{\dag} \stackrel{s_F}{\rightarrow} W(A^{\dag}) \rightarrow W^{\dag}(\overline{A}),$ as in \cite[Proposition~3.2]{DavisLZ:Rham}.  Also write $t_F$ for the induced map $\Omega^{\bullet}_{A^{\dag}/W(k)} \rightarrow W^{\dag}\Omega^{\bullet}_{\overline{A}}$.    
\end{Def}

\begin{Prop} \label{t_F for sheaves}
Let $\overline{A},\,A^{\dag},\,F$ be as above and let $\mathscr{F}^{\bullet}_{A^{\dag}}$ be as in Definition~\ref{MW sheaf definition}.  Then the map $t_F$ defined in Definition~\ref{definition of comparison map} induces a map on complexes of Zariski sheaves
\[
t_F\colon \mathscr{F}^{\bullet}_{A^{\dag}} \rightarrow W^{\dag} \Omega^{\bullet}_{\overline{X}}.
\]
\end{Prop}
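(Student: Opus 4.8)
The plan is to promote the ring map $t_F\colon \Omega^{\bullet}_{A^{\dag}/W(k)} \to W^{\dag}\Omega^{\bullet}_{\overline{A}}$ of Definition~\ref{definition of comparison map} to a map of presheaves on $\Spec \overline{A}$ and then observe that it sheafifies. The key point is that both sides are defined by a ``localization-compatible'' recipe: for a distinguished open $\overline{X}_{\overline{f}} = \Spec \overline{A}_{\overline{f}}$, one may lift $\overline{f}$ to $f \in A^{\dag}$ and take the weak completion $(A_f)^{\dag}$ of the corresponding lift of $\overline{A}_{\overline{f}}$, and Meredith's construction (\cite[Definition~4]{Mer71}, as in Definition~\ref{MW sheaf definition}) identifies $\mathscr{F}^i_{A^{\dag}}(\overline{X}_{\overline{f}})$ with $\Omega^i_{(A_f)^{\dag}/W(k)}$. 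On the de Rham–Witt side, by \cite[Corollary~1.6]{DavisLZ:Rham} the sheaf $W^{\dag}\Omega^{i}_{\overline{X}}$ has sections $W^{\dag}\Omega^i_{\overline{A}_{\overline{f}}}$ over $\overline{X}_{\overline{f}}$.

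First I would show that a lift of Frobenius $F$ on $A^{\dag}$ induces, for each distinguished open, a lift of Frobenius on $(A_f)^{\dag}$ (one checks $F$ extends to the localization and then to its weak completion, using Proposition~\ref{maps lift} or a direct argument that $F(f)$ is a unit times $f^p$ modulo $p$, so inverting $f$ suffices). Then Proposition~\ref{comparison functoriality} gives a map $s_{F}$ on $(A_f)^{\dag}$ compatible with the one on $A^{\dag}$ under the localization map, and composing with $W((A_f)^{\dag}) \to W^{\dag}(\overline{A}_{\overline{f}})$ (\cite[Proposition~3.2]{DavisLZ:Rham}) yields $t_F$ over each distinguished open. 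Second, I would check naturality: for $\overline{X}_{\overline{g}} \subseteq \overline{X}_{\overline{f}}$ the two resulting maps $\Omega^{\bullet}_{(A_f)^{\dag}} \to W^{\dag}\Omega^{\bullet}_{\overline{A}_{\overline{g}}}$ agree, which follows from the functoriality square in Proposition~\ref{comparison functoriality} together with the compatibility of the $W(-) \to W^{\dag}(-)$ projections under localization. This gives a morphism of presheaves on the basis of distinguished opens, hence (since $\mathscr{F}^{\bullet}_{A^{\dag}}$ is a sheaf by Lemma~\ref{MW acyclic} and $W^{\dag}\Omega^{\bullet}_{\overline{X}}$ is a sheaf by \cite[Corollary~1.6]{DavisLZ:Rham}) a morphism of complexes of Zariski sheaves, recovering the original $t_F$ on global sections.

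The main obstacle I anticipate is the bookkeeping needed to make the localization compatibilities precise — specifically, verifying that the lift of Frobenius, the section $s_F$, and the projection to the overconvergent Witt vectors all commute with the restriction maps in a single coherent diagram, and that Meredith's presheaf associated to $\Omega^i_{A^{\dag}}$ really does compute $\Omega^i_{(A_f)^{\dag}}$ on distinguished opens (i.e.\ that weak completion commutes with the relevant localizations up to the identification built into \cite[Definition~4]{Mer71}). Once this compatibility is in hand, sheafification is formal: a map of presheaves on a basis closed under finite intersection extends uniquely to a map of the associated sheaves, and since the target presheaves are already sheaves here, no further argument is needed.

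Alternatively — and perhaps more cleanly — one can avoid re-deriving the localization behaviour by invoking the universal property of sheafification directly: $t_F$ is a map of presheaves $\Omega^{\bullet}_{A^{\dag}} \to W^{\dag}\Omega^{\bullet}_{\overline{A}}$ on $\Spec \overline{A}$ once we regard both as constant-on-$\Spec\overline{A}$ presheaves, but that is too weak to be useful; the content genuinely is that both complexes are sheaves whose sections over distinguished opens are again Monsky–Washnitzer, resp.\ overconvergent de Rham–Witt, complexes, and that $t_F$ is compatible with this. So I would present the argument in the first form above, citing Proposition~\ref{comparison functoriality}, \cite[Proposition~3.2]{DavisLZ:Rham}, and \cite[Corollary~1.6]{DavisLZ:Rham} for the three compatibilities, and Lemma~\ref{MW acyclic} for the sheaf property of the source.
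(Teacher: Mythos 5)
Your proposal is correct and follows essentially the same route as the paper: reduce to the ring map in degree zero, extend the Frobenius lift ($p$-adically continuously, hence uniquely) to the weak completion $A^{\dag}_{[f]}$ of each localization, and invoke the functoriality square of Proposition~\ref{comparison functoriality} to get compatibility with restriction. The paper's proof is just a terser version of your first argument, with the uniqueness of the continuous extension of $F$ doing the work of your naturality check.
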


\begin{proof}
Because the maps on complexes are determined by the maps in degree zero, it suffices to prove this in degree zero. For any $f \in A^{\dag}$, let $A^{\dag}_{[f]}$ denote the weak completion of the localization $A^{\dag}_{f}$, as in \cite[Definition~2.1]{Mer71}.  The $p$-adically continuous ring homomorphism $F\colon A^{\dag} \rightarrow A^{\dag}$ extends uniquely to a $p$-adically continuous ring homomorphism $A^{\dag}_{[f]} \rightarrow A^{\dag}_{[f]}$.  Hence we are finished by the functoriality assertion in Proposition~\ref{comparison functoriality}.
\end{proof}

Our goal is to show that the comparison map $t_F$ defined above induces an isomorphism between (certain) integral Monsky-Washnitzer cohomology groups and (certain) overconvergent de\thinspace Rham-Witt cohomology groups.  This result will generalize \cite[Corollary~3.25(a)]{DavisLZ:Rham}, which states that if the dimension of $\Spec \overline{A}$ is small relative to the characteristic~$p$, then these comparison maps induce isomorphisms between (all) integral Monsky-Washnitzer cohomology groups and (all) overconvergent de\thinspace Rham-Witt cohomology groups. 

Our strategy for showing that these comparison maps induce isomorphisms in certain degrees will be to prove the result locally, and then deduce the result in general using the \v{C}ech spectral sequence.  The \v{C}ech covering of $\overline{X}$ we use will typically be finer than the covering used in Section~\ref{MW section}.

\begin{Def}
An affine variety is called \defi{special} if it is a distinguished open inside a scheme which is finite \'etale over affine space.
\end{Def}

\begin{Prop} \label{special affine covering}
Let $\overline{X}$ denote a nonsingular affine variety over $k$.  Then $\overline{X}$ can be covered by finitely many special affines in such a way that all finite intersections are also special.
\end{Prop}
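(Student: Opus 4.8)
The plan is to reduce to a local statement—that every point has a special affine neighborhood which is also a distinguished open in the ambient variety—and then invoke a ``Nike's Lemma''-style argument exactly as in Lemma~\ref{covering by complete transversal intersections}. First I would recall that, since $\overline{X}$ is a nonsingular affine variety over $k$, the Jacobian criterion together with the structure theory of smooth morphisms (e.g.\ the local structure theorem: \'etale-locally a smooth morphism factors through affine space) shows that $\overline{X}$ is covered by open affines $\overline{U}_i$ each of which admits a finite \'etale map to some affine space $\mathbb{A}^{n_i}_k$; such a $\overline{U}_i$ is special (it is a distinguished open—indeed all of—a scheme finite \'etale over affine space). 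So $\overline{X}$ is covered by finitely many special affines, using quasi-compactness.

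The subtlety is that a finite intersection of special affines need not obviously be special: intersecting $\overline{U}_i$ and $\overline{U}_j$ gives an open affine but not, a priori, something finite \'etale over affine space. To handle this I would first establish the key closure property: \emph{a distinguished open inside a special affine is again special}. Indeed, if $\overline{U} = \Spec \overline{B}$ is finite \'etale over $\mathbb{A}^n_k = \Spec k[t_1,\dots,t_n]$ and $g \in \overline{B}$, then $\overline{B}_g$ is a distinguished open of $\overline{B}$; writing $N(g) \in k[t_1,\dots,t_n]$ for the norm of $g$ down to the affine space (using that $\overline{B}$ is finite locally free over $k[\overline{t}]$), one checks $\overline{B}_g$ is finite \'etale over $(k[\overline{t}])_{N(g)}$, which is itself a distinguished open of $\mathbb{A}^n_k$; hence $\overline{B}_g$ is a distinguished open inside a scheme finite \'etale over affine space, i.e.\ special. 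More generally a distinguished open inside a distinguished-open-of-finite-\'etale-over-affine-space is still of that form, by transitivity of localization.

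With this closure property in hand, the combinatorial ``Nike's Lemma'' step goes through verbatim as in Lemma~\ref{covering by complete transversal intersections}: given the cover $\{\overline{U}_i\}$ by special affines, I refine it to a cover $\{\overline{V}_\alpha\}$ where each $\overline{V}_\alpha$ is simultaneously a distinguished open of $\overline{X}$ and a distinguished open of the special affine $\overline{U}_{i(\alpha)}$ containing it—possible because distinguished opens form a basis, and $\overline{X}$ is quasi-compact so finitely many suffice. Then each $\overline{V}_\alpha$ is special by the closure property, and any finite intersection $\overline{V}_{\alpha_0} \cap \dots \cap \overline{V}_{\alpha_r}$ is a distinguished open of $\overline{X}$ (intersection of distinguished opens of $\overline{X}$ is a distinguished open of $\overline{X}$), hence in particular a distinguished open of each $\overline{V}_{\alpha_j}$, hence special again.

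I expect the main obstacle to be the verification of the closure property in the second paragraph—specifically, pinning down that the localization of a finite-\'etale-over-affine-space algebra at one element is still finite \'etale over a (slightly larger) distinguished open of affine space, which requires being careful about the norm map and the fact that finiteness and \'etaleness are both preserved under the relevant base change. Once that is isolated as a clean lemma, the rest is a formal quasi-compactness and basis-of-distinguished-opens argument identical to the one already used for complete transversal intersections, so I would phrase the write-up to emphasize the closure lemma and then say ``the result now follows exactly as in the proof of Lemma~\ref{covering by complete transversal intersections}.''
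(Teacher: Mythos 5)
There is a genuine gap in your first step. You assert that the existence of a cover of $\overline{X}$ by open affines admitting \emph{finite} \'etale maps to affine space follows from the Jacobian criterion and the local structure theory of smooth morphisms. It does not: the local structure theorem produces, Zariski-locally, an \'etale morphism to $\mathbb{A}^{n}_{k}$, and there is no general mechanism for upgrading ``\'etale'' to ``finite \'etale'' over an open of the target, let alone over all of $\mathbb{A}^n_k$. The existence of such covers is a nontrivial theorem of Kedlaya, special to positive characteristic, and this is exactly what the paper cites ([Ked05]) at this point. Without that input your argument never gets off the ground, since the whole content of ``special'' is the finiteness of the \'etale map.

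A secondary issue: your norm computation in the closure step is incorrect. If $\overline{B}$ is finite \'etale over $k[t_1,\dots,t_n]$ and $g\in\overline{B}$, then $\overline{B}_g$ is generally \emph{not} finite over $k[t_1,\dots,t_n]_{N(g)}$ (it is not even an algebra over that ring, since $N(g)$ need not be invertible in $\overline{B}_g$); the open $D(g)$ is typically strictly larger than the preimage $D(N(g))$ of the locus where $g$ is nonvanishing on the whole fiber, and only $\overline{B}_{N(g)}$ is finite \'etale over $k[t_1,\dots,t_n]_{N(g)}$. Fortunately this detour is unnecessary: ``special'' is \emph{defined} as a distinguished open inside a scheme finite \'etale over affine space, so the closure property is immediate from transitivity of localization, as you note at the end of that paragraph and as the paper does. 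Your final ``Nike's Lemma'' step matches the paper's proof. So the fix is simply to replace the first paragraph by an appeal to Kedlaya's theorem and to drop the norm argument.
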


\begin{proof}
The fact that $\overline{X}$ can be covered by special affines comes from \cite{Ked05}.  
Note that a distinguished open inside of a special affine is also a special affine.  We are now finished as in the proof of Lemma~\ref{covering by complete transversal intersections}.
\end{proof}

\begin{Lem} \label{covering for OdRW}
Let $\overline{X}$ denote a nonsingular affine variety over $k$.  Then $\overline{X}$ can be covered by finitely many open affines which are both special and complete transversal intersections, and in such a way that any finite intersection is also both special and a complete transversal intersection.  
\end{Lem}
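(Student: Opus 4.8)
The plan is to combine the two covering results already established—Lemma~\ref{covering by complete transversal intersections} (coverings by complete transversal intersections, stable under finite intersection) and Proposition~\ref{special affine covering} (coverings by special affines, stable under finite intersection)—and to show that one can simultaneously realize both properties by passing to a common refinement consisting of distinguished opens. The key observation, already used implicitly in both earlier proofs, is that the property ``is a complete transversal intersection'' and the property ``is special'' are each preserved under passage to a distinguished open subscheme: a distinguished open $D(\overline{g})$ inside a complete transversal intersection $\overline{B}$ is again a complete transversal intersection (this was noted in the proof of Lemma~\ref{covering by complete transversal intersections}), and a distinguished open inside a special affine is again special (noted in the proof of Proposition~\ref{special affine covering}).

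First I would fix, using Lemma~\ref{covering by complete transversal intersections}, a finite covering $\mathscr{U} = \{\overline{U}_\alpha\}$ of $\overline{X}$ by complete transversal intersections with all finite intersections again complete transversal intersections, and separately, using Proposition~\ref{special affine covering}, a finite covering $\mathscr{V} = \{\overline{V}_\beta\}$ by special affines, likewise closed under finite intersection. Next I would pass to the common refinement: for each pair $(\alpha,\beta)$, cover the open set $\overline{U}_\alpha \cap \overline{V}_\beta$ by affines that are distinguished opens in both $\overline{U}_\alpha$ and $\overline{V}_\beta$ simultaneously—this is again the ``Nike's Lemma'' input cited in Lemma~\ref{covering by complete transversal intersections}, applied to the open immersion $\overline{U}_\alpha \cap \overline{V}_\beta \hookrightarrow \overline{U}_\alpha$ (and likewise into $\overline{V}_\beta$), since any point of the intersection has an open neighborhood that is basic in both ambient affines. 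Each such affine $\overline{W}$ is then a distinguished open inside a complete transversal intersection, hence itself a complete transversal intersection; and it is a distinguished open inside a special affine, hence special. Collecting all these $\overline{W}$ over all $(\alpha,\beta)$ gives a finite covering $\mathscr{W}$ of $\overline{X}$ by affines that are simultaneously special and complete transversal intersections.

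Finally I would check stability under finite intersection. Given finitely many members $\overline{W}_1, \ldots, \overline{W}_m$ of $\mathscr{W}$, their intersection is a distinguished open inside each $\overline{W}_j$ (an intersection of distinguished opens in an affine scheme is distinguished), hence a distinguished open inside $\overline{W}_1$, which is both special and a complete transversal intersection; applying once more the two ``distinguished open of a $\ldots$ is again a $\ldots$'' facts shows $\overline{W}_1 \cap \cdots \cap \overline{W}_m$ has both properties. (Technically one should note the intersection may be a disjoint union of basic opens, or handle connectedness exactly as in the earlier lemmas; I would dispatch this the same way those proofs do.) This completes the argument.

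I expect the only real subtlety—hardly an obstacle—to be the bookkeeping of simultaneously intersecting two unrelated coverings and verifying that the ``distinguished open'' relation is genuinely transitive through the common refinement, so that a member of $\mathscr{W}$ really does sit as a basic open inside \emph{both} a complete transversal intersection and a special affine at the same time. Everything else is a direct invocation of Lemma~\ref{covering by complete transversal intersections}, Proposition~\ref{special affine covering}, and the two stability facts already recorded in their proofs.
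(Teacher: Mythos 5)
Your proposal is correct and follows essentially the same route as the paper: take the two coverings from Lemma~\ref{covering by complete transversal intersections} and Proposition~\ref{special affine covering} and combine them, using the stability of both properties under passage to distinguished opens. The paper's version is slightly more direct at one point — since both coverings already consist of distinguished opens of $\overline{X}$, it simply intersects them pairwise (an intersection of distinguished opens being distinguished in each member), so your extra invocation of Nike's Lemma for the common refinement is not needed, though it does no harm.
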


\begin{proof}
Begin by finding two separate coverings, one as in Proposition~\ref{special affine covering} and one as in Lemma~\ref{covering by complete transversal intersections}.  These are both coverings by distinguished opens; in one case the distinguished opens are special affines and in the other case they are complete transversal intersections.  Because distinguished opens of special affines (resp., complete transversal intersections) are special affines (resp., complete transversal intersections), intersecting these two coverings completes the proof.
\end{proof}

The following proposition asserts that integral Monsky-Washnitzer cohomology is isomorphic to overconvergent de\thinspace Rham-Witt cohomology in the particular case that $\overline{X} = \Spec \overline{A}$ is a special affine.  It makes use of a certain comparison map $\sigma$ defined in \cite{DavisLZ:Rham}; the definition depends on a choice of presentation of $\overline{A}$.

\begin{Prop} \label{local quasi-isomorphism dRW}
Let $\overline{X} = \Spec \overline{A}$ denote a special affine.  Fix both a weakly complete lift $A^{\dag}$ and a comparison map
\[
\sigma\colon \Omega^{\bullet}_{A^{\dag}/W(k)} \rightarrow W^{\dag}\Omega^{\bullet}_{\overline{A}}
\] 
as in \cite[(3.5)]{DavisLZ:Rham}.  For all $i$, the induced map
\[
\sigma\colon H^i(\Omega^{\bullet}_{A^{\dag}}) \rightarrow H^i(W^{\dag} \Omega^{\bullet}_{\overline{A}})
\]
is an isomorphism.
\end{Prop}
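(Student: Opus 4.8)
The plan is to reduce the assertion to the case of affine space by peeling off the two operations that build a special affine out of affine space: a finite \'etale extension followed by passage to a distinguished open. Fix a presentation of $\overline{A}$ adapted to its special structure, say $\overline{A} = (\overline{C})_{\overline{g}}$ with $\overline{C}$ finite \'etale over $\overline{R} := k[x_1,\dots,x_n]$ and $\overline{g} \in \overline{C}$. Lift $\overline{R}$ to $R := W(k)[x_1,\dots,x_n]$, lift $\overline{C}$ to the essentially unique finite \'etale $R$-algebra $C$, lift $\overline{g}$ to $g \in C$, and take $A := C_g$, a nonsingular lift of $\overline{A}$; by Theorem~\ref{main theorem}(\ref{MW part}) the groups $H^i(\Omega^{\bullet}_{A^{\dag}})$ do not depend on this choice, so it is harmless to work with it. Taking $F$ on $R^{\dag}$ to be the unique continuous Frobenius lift with $x_i \mapsto x_i^p$ and extending it (uniquely) to $C^{\dag}$ by \'etaleness and then to $A^{\dag}$ by localization, one obtains compatible Frobenius lifts, hence compatible maps $t_F$, and a compatible triple of comparison maps $\sigma$ for $\overline{R}$, $\overline{C}$, $\overline{A}$ coming from \cite[(3.5)]{DavisLZ:Rham}; the presentation-dependence of $\sigma$ is thereby pinned down by the special structure.

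Next I would carry out the two reductions. For the localization step $\overline{C} \rightsquigarrow \overline{A} = (\overline{C})_{\overline{g}}$: on the Monsky--Washnitzer side $A^{\dag} = (C^{\dag})_{[g]}$ is flat over $C^{\dag}$ by \cite[Proposition~1.3]{Mer71}, and $\Omega^{\bullet}_{A^{\dag}} \cong A^{\dag} \otimes_{C^{\dag}} \Omega^{\bullet}_{C^{\dag}}$ since inverting $g$ contributes no new differentials, whence $H^i(\Omega^{\bullet}_{A^{\dag}}) \cong A^{\dag} \otimes_{C^{\dag}} H^i(\Omega^{\bullet}_{C^{\dag}})$. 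On the overconvergent de~Rham--Witt side one uses the analogous \'etale/localization base-change isomorphism $W^{\dag}\Omega^{\bullet}_{\overline{A}} \cong W^{\dag}(\overline{A}) \otimes_{W^{\dag}(\overline{C})} W^{\dag}\Omega^{\bullet}_{\overline{C}}$ together with flatness of $W^{\dag}(\overline{A})$ over $W^{\dag}(\overline{C})$, both recorded in \cite{DavisLZ:Rham}. Since the maps $t_F$ identify these localization base changes on the two sides and $\sigma$ respects them, the proposition for $\overline{C}$ implies it for $\overline{A}$. The finite \'etale step $\overline{R} \rightsquigarrow \overline{C}$ is identical, with ``localization'' replaced by ``finite \'etale'': $\Omega^{\bullet}_{C^{\dag}} \cong C^{\dag} \otimes_{R^{\dag}} \Omega^{\bullet}_{R^{\dag}}$ and $W^{\dag}\Omega^{\bullet}_{\overline{C}} \cong W^{\dag}(\overline{C}) \otimes_{W^{\dag}(\overline{R})} W^{\dag}\Omega^{\bullet}_{\overline{R}}$, with the requisite flatness now automatic. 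We are reduced to proving that $\sigma \colon H^i(\Omega^{\bullet}_{R^{\dag}}) \to H^i(W^{\dag}\Omega^{\bullet}_{\overline{R}})$ is an isomorphism for $\overline{R} = k[x_1,\dots,x_n]$ and all $i$.

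For affine space the plan is to induct on $n$ using a K\"unneth decomposition: $\Omega^{\bullet}_{W(k)\langle x_1,\dots,x_n\rangle^{\dag}}$ is the completed tensor product over $W(k)$ of the one-variable complexes $\Omega^{\bullet}_{W(k)\langle x_i\rangle^{\dag}}$, and, with $F(x_i) = x_i^p$, the complex $W^{\dag}\Omega^{\bullet}_{k[x_1,\dots,x_n]}$ likewise decomposes as a tensor product of the one-variable overconvergent de~Rham--Witt complexes, compatibly with $\sigma$. The base case $n = 1$ is a comparison of two two-term complexes, and since the source is one-dimensional it is an instance of \cite[Corollary~3.25(a)]{DavisLZ:Rham}; the inductive step goes through once one checks the relevant tensor products are degreewise flat, so that cohomology commutes with them. (If \cite{DavisLZ:Rham} already contains the comparison for affine space, or more generally for affines \'etale over affine space, without a restriction on dimension, this paragraph collapses to a citation and the two reductions above become unnecessary.)

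The main obstacle is the bookkeeping around $\sigma$. Because $\sigma$ exists only relative to a presentation, the reductions above require that the maps produced by \cite[(3.5)]{DavisLZ:Rham} genuinely be compatible with restriction of presentations along finite \'etale maps, localizations, and tensor products; and one must know that $W^{\dag}(-)$ satisfies the asserted \'etale and localization base-change, with $t_F$ intertwining it with weak completion. These are exactly the points where the special structure is used, and verifying them is where the work lies. Once they are in place, degreewise flatness propagates the one-variable isomorphism through the tensor product and then through the finite \'etale extension and the localization, yielding the proposition.
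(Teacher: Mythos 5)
Your instinct at the end of the third paragraph is the right one: the proposition is \emph{exactly} the statement of \cite[Theorem~3.19]{DavisLZ:Rham}, and the paper's entire proof is the citation ``See \cite[Theorem~3.19]{DavisLZ:Rham}.'' The notion of \emph{special affine} is defined in this paper precisely so that the hypotheses of that theorem apply, and the comparison map $\sigma$ of \cite[(3.5)]{DavisLZ:Rham} is already constructed relative to a presentation of the required form (localization of a finite \'etale extension of a polynomial ring). So your two reduction steps and the K\"unneth induction are not a different proof of this proposition so much as an attempted reconstruction of the proof of the cited theorem; in broad strokes (polynomial base case, then \'etale base change and localization for both $\Omega^{\bullet}_{(-)^{\dag}}$ and $W^{\dag}\Omega^{\bullet}$) that reconstruction does track the strategy used in \cite{DavisLZ:Rham}.

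Two points in your sketch would not survive scrutiny if you insisted on carrying it out independently. First, your base case invokes \cite[Corollary~3.25(a)]{DavisLZ:Rham}, but in that paper Corollary~3.25 is \emph{deduced from} Theorem~3.19 (together with the Frobenius-independence argument of Proposition~3.27), so using it to seed the induction is circular; for $\overline{R}=k[x]$ one must instead do the direct two-term-complex computation. Second, your appeal to Theorem~\ref{main theorem}(\ref{MW part}) to replace the given lift $A^{\dag}$ by a convenient one only produces \emph{some} isomorphism of the groups $H^i(\Omega^{\bullet}_{A^{\dag}})$; it does not show that the specific map $\sigma$ attached to the given lift and presentation is an isomorphism, which is what the proposition asserts. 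The dependence of the comparison map on such choices is controlled in this paper only by Proposition~\ref{independent of F}, which works only in degrees $i<p$, whereas the present proposition is claimed in all degrees --- so the ``harmless to change the lift'' step cannot be justified by the tools available here and must instead be absorbed into the hypothesis, as the paper does by fixing $\sigma$ as in \cite[(3.5)]{DavisLZ:Rham} from the outset.
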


\begin{proof}
See \cite[Theorem~3.19]{DavisLZ:Rham}.
\end{proof}

Note that Proposition~\ref{local quasi-isomorphism dRW} asserts an isomorphism between the cohomology groups in all degrees, but it requires a particular choice of comparison map $\sigma$.  For $\overline{X}$ arbitrary, not necessarily special affine, there is not an obvious analogue of $\sigma$.  For this reason, below we will take for our comparison map $t_F$, as in Definition~\ref{definition of comparison map}.  Upon restricting to a special affine, we are only able to prove that the maps induced on cohomology by $\sigma$ and $t_F$ agree in suitably small degrees.  This explains the restriction on degree in Theorem~\ref{main theorem}(\ref{small degree part}).

\begin{Prop} \label{independent of F}
Let $\overline{X} = \Spec \overline{A}$ denote a special affine.   Fix a weakly complete lift $A^{\dag}$ as in Section~\ref{MW section}, and fix a lift of Frobenius $F\colon A^{\dag} \rightarrow A^{\dag}$.  Let
\[
t_F\colon \Omega^{\bullet}_{A^{\dag}/W(k)} \rightarrow W^{\dag} \Omega^{\bullet}_{\overline{A}}
\]
be the comparison morphism from Definition~\ref{definition of comparison map}.
Then for all $i < p$, the induced map
\[
t_F\colon H^i(\Omega^{\bullet}_{A^{\dag}/W(k)}) \rightarrow H^i(W^{\dag} \Omega^{\bullet}_{\overline{A}})
\]
is the same as the isomorphism $\sigma$ from Proposition~\ref{local quasi-isomorphism dRW}.  In particular, the maps on cohomology induced by $t_F$ are independent of the choice of Frobenius lift $F$.
\end{Prop}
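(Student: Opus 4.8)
The plan is to reduce the comparison of the two maps $t_F$ and $\sigma$ to a statement about homotopies between chain maps, exploiting that both maps lift the identity on $\overline{A}$ in degree zero and that the relevant complexes are torsion-free. First I would recall from \cite{DavisLZ:Rham} that $\sigma$ is itself built from a choice of lift of Frobenius: the construction in \cite[(3.5)]{DavisLZ:Rham} produces $\sigma$ as (up to the identification $t_F$ on degree-zero elements, composed with further corrections) a map associated to a Frobenius lift on a polynomial presentation of $A^{\dag}$. So the two maps $t_F$ and $\sigma$ agree in degree zero (both are $a \mapsto t_F(a)$, the ghost-component map of Definition~\ref{definition of comparison map} followed by projection to $W^{\dag}(\overline{A})$), and more generally both are morphisms of complexes $\Omega^{\bullet}_{A^{\dag}/W(k)} \to W^{\dag}\Omega^{\bullet}_{\overline{A}}$ lifting the same map of algebras.

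The key step is then a uniqueness-up-to-homotopy argument. The source complex $\Omega^{\bullet}_{A^{\dag}/W(k)}$ is, for a special affine (in particular a complete transversal intersection), a complex computing something like the de Rham cohomology of a smooth weakly complete algebra, and $A^{\dag}$ is flat over $A$; in this setting two lifts of the identity on $\overline{A}$ to morphisms of differential graded algebras into $W^{\dag}\Omega^{\bullet}_{\overline{A}}$ are chain homotopic, by the standard argument that on a free (polynomial) presentation one can write down an explicit homotopy interpolating between two Frobenius lifts, and this homotopy descends. I would invoke the relevant homotopy lemma from \cite{DavisLZ:Rham} (the same mechanism that shows $\sigma$ is well-defined independent of its auxiliary choices) to conclude that $t_F$ and $\sigma$ are chain homotopic \emph{after} one controls the denominators introduced by the homotopy — and this is exactly where the bound $i < p$ enters. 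The homotopy operator typically involves division by integers up to the degree, so it is integrally defined only in degrees less than $p$; hence the induced maps on $H^i$ agree for $i < p$, even though the homotopy itself is not integral in high degrees.

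Concretely, the steps in order: (1) unwind the definition of $\sigma$ from \cite[(3.5), Theorem~3.19]{DavisLZ:Rham} and observe it is of the form $t_{F_0}$ for a particular Frobenius lift $F_0$ attached to the chosen presentation, possibly post-composed with an integral automorphism; (2) reduce to showing $t_F$ and $t_{F_0}$ induce the same map on $H^i$ for $i<p$; (3) for any two Frobenius lifts $F, F_0$ on $A^{\dag}$, construct the standard chain homotopy $h$ between $t_F$ and $t_{F_0}$ on a polynomial lift, checking that $h$ is integral in degrees $\le p-1$ and noting it need not be in higher degrees; (4) conclude $t_F - t_{F_0} = dh + hd$ in degrees $< p$, hence equality on $H^i$ for $i<p$; (5) since $\sigma$ is an isomorphism by Proposition~\ref{local quasi-isomorphism dRW}, so is $t_F$ on $H^i$ for $i<p$, and the final sentence about independence of $F$ is immediate since the target is now identified with $H^i(W^{\dag}\Omega^{\bullet}_{\overline{A}})$ via the intrinsic $\sigma$. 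The main obstacle I anticipate is step (3): carefully matching the normalization of the homotopy operator used implicitly in \cite{DavisLZ:Rham} (where results are stated for $\dim \overline{X}$ small rather than $i$ small) with the degree-wise bound we need here, and verifying that the only obstruction to integrality is the expected factorials, so that the cutoff is genuinely at $p$ and not smaller.
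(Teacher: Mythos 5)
Your proposal matches the paper's argument: the paper also invokes the homotopy construction from the proof of \cite[Proposition~3.27]{DavisLZ:Rham} (the maps $L\circ\varphi$ with $d(L\circ\varphi(\omega))=\phi_1(\omega)-\phi_2(\omega)$ for two dga morphisms lifting the identity on $\overline{A}$), and the whole content is exactly your step (3) — checking that the homotopy operator, whose only non-integrality comes from dividing $\omega_j$ by $j+1$, actually lands in $W^{\dag}\Omega^{i-1}_{\overline{A}}$ when $i<p$, improving on the $p^{\kappa}$-multiplied statement in \cite{DavisLZ:Rham}. The only cosmetic difference is that the paper applies the homotopy directly to the pair $(\sigma, t_F)$ rather than first rewriting $\sigma$ as some $t_{F_0}$.
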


\begin{proof}
We deduce this from the proof of \cite[Proposition~3.27]{DavisLZ:Rham} as follows.  Let 
\[
\phi_1,\, \phi_2\colon \Omega^{\bullet}_{A^{\dag}/W(k)} \rightarrow W^{\dag} \Omega^{\bullet}_{\overline{A}}
\]
denote two morphisms of differential graded algebras which induce the identity map on $\overline{A}$; we will apply this to the two maps $\sigma$ and $t_F$.   Choose $i < p$ and let $\omega \in \Omega^{i}_{A^{\dag}/W(k)}$ denote a cocycle.  We would like to show that $\phi_1(\omega) - \phi_2(\omega)$ is a coboundary in $W^{\dag} \Omega^{i}_{\overline{A}}$.  

We claim that for $i < p$ and for the maps $\varphi, L$ defined in the proof of \cite[Proposition~3.27]{DavisLZ:Rham}, the map
\[
L \circ \varphi\colon \Omega^{i}_{A^{\dag}/W(k)} \rightarrow W^{\dag} \Omega^{i-1}_{\overline{A}}
\]
is such that 
\[
d\left( L \circ \varphi(\omega) \right) =  \phi_1(\omega) - \phi_2(\omega).
\]
By the proof of \cite[Proposition~3.27]{DavisLZ:Rham}, we need only to check that the map $L \circ \varphi$ does indeed have image in $W^{\dag} \Omega^{i-1}_{\overline{A}}$.  (The proof in \cite{DavisLZ:Rham} shows this only after multiplying by $p^{\kappa}$, for $\kappa = \lfloor \log_p \dim \overline{X} \rfloor$.)  By \cite[Lemma~3.31(i)]{DavisLZ:Rham} and the definition of $L$, we are reduced to proving that if $\omega_j \in W^{\dag} \Omega^{i-1}_{\overline{A}}$ is divisible by $p^{\max(j-i+1,0)}$ for some $j \geq 0$, then 
\[
\frac{\omega_j}{j+1} \in W^{\dag} \Omega^{i-1}_{\overline{A}}.
\]
If $\max(j-i+1,0) = 0$, then $j + 1 \leq i < p$, and the result is trivial, since $j+1$ is a unit in $W(k)$.  So it remains to consider the case $\max(j-i+1,0) = j-i+1 > 0.$  The first case in which this occurs is $j = i$.  Recalling that $i < p$, we are finished in this case because $i - i + 1 = 1 \geq \log_p(i+1)$.  The remaining cases are also easy, because $j - i + 1$ grows faster with $j$ than $\log_p(j+1)$.  
\end{proof}

We are now ready to prove the second half of our main theorem.

\begin{proof}[Proof of Theorem~\ref{main theorem}(\ref{small degree part})]
Fix a weakly complete lift $A^{\dag}$ of $A$, and a lift of Frobenius $F\colon A^{\dag} \rightarrow A^{\dag}$.  Let $\mathscr{F}^{\bullet}_{A^{\dag}}$ denote the complex of sheaves associated to $A^{\dag}$ as in Definition~\ref{MW sheaf definition}. Let 
\[
t_F\colon \mathscr{F}^{\bullet}_{A^{\dag}/W(k)} \rightarrow W^{\dag} \Omega^{\bullet}_{\overline{A}}
\]
denote the morphism of sheaves, as in Proposition~\ref{t_F for sheaves}.  Now choose a distinguished open covering $\mathscr{U}$ of $\Spec \overline{A}$ as in Lemma~\ref{covering for OdRW}.   Let $E$ (resp., $E'$) denote the spectral sequence in Proposition~\ref{Cech spectral sequence} corresponding to the covering $\mathscr{U}$ and the complex of sheaves $\mathscr{F}^{\bullet}_{A^{\dag}}$ (resp., $W^{\dag}\Omega^{\bullet}_{\overline{X}}$).  Let $U_{I}$ denote an arbitrary finite intersection of opens $U_{\alpha} \in \mathscr{U}$ and consider the induced map on cohomology 
\[
t_F\colon \HH^i(U_I, \mathscr{F}_{A^{\dag}}^{\bullet}) \rightarrow \HH^i(U_I, W^{\dag} \Omega^{\bullet}_{\overline{A}}).
\]
By Lemma~\ref{MW acyclic} (resp., \cite[Proposition~1.2(b)]{DavisLZ:Rham}) the individual sheaves in the complex $\mathscr{F}_{A^{\dag}}^{\bullet}$ (resp., $W^{\dag}\Omega^{\bullet}_{\overline{X}}$) have trivial sheaf cohomology.  
Hence by Lemma~\ref{hypercohomology interpretation} and Proposition~\ref{independent of F}, the maps $t_F$ above are isomorphisms for $i < p = \Char \overline{A}$.  (Our reason to avoid the letter $p$ will be clear in the next paragraph.)

Thus $t_F$ induces a map of spectral sequences and isomorphisms $E_2^{p,q} \xrightarrow{\sim}  {E'}_2^{p,q}$ for all $p$ and all $q < \Char \overline{A}$.  Then by Lemma~\ref{spectral sequence rectangle} the map $t_F$ induces an isomorphism $E_{\infty}^{p+q} \xrightarrow{\sim}  {E'}_{\infty}^{p+q}$ for $p + q < \Char \overline{A}$.
Hence the induced map
\[
t_F \colon \HH^{p+q}\left(X,\mathscr{F}_{A^{\dag}}^{\bullet}\right) \to \HH^{p+q}\left(X,W^{\dag}\Omega^{\bullet}_{\overline{X}}\right)
\]
is an isomorphism for $p + q < \Char \overline{A}$.  We are finished by Lemma~\ref{hypercohomology interpretation}.
\end{proof}

\begin{Rmk}
It would be interesting to have an example in which there is not an integral isomorphism between Monsky-Washnitzer cohomology and overconvergent de\thinspace Rham-Witt cohomology (or to prove that no such example exists).  By Theorem~\ref{main theorem}, the dimension of $\Spec \overline{A}$ in such an example would have to be at least the characteristic~$p$.  Perhaps the first class to consider when seeking such an example would be the two-dimensional affine varieties over $\FF_{2}$.  
\end{Rmk}

%%%%%%%%%%%%%%%%%%%%%%%%%%%%%%%%%%%%%%%%%%%%%%%%%%%%%%%%%%%%%%%%%%%%%%%%%%%

\bibliography{Integral}
\bibliographystyle{plain}

\end{document}